\newtheorem{proposition}{Proposition}[section]
\newtheorem{theorem}[proposition]{Theorem}
\newtheorem{corollary}[proposition]{Corollary}
\theoremstyle{definition}
\newtheorem{definition}[proposition]{Definition}
\newtheorem{remark}[proposition]{Remark}
\newcommand{\beq}{\begin{equation}}
\newcommand{\eeq}{\end{equation}}
\newcommand{\ben}{\begin{enumerate}}
\newcommand{\een}{\end{enumerate}}
\newcommand{\bit}{\begin{itemize}}
\newcommand{\eit}{\end{itemize}}
\newcommand{\dys}{\displaystyle}
\newcommand{\R}{\mathbb{R}}
\newcommand{\eps}{\varepsilon}
\newcommand{\vfi}{\varphi} 
\providecommand{\abs}[1]{\left|#1\right|}
\providecommand{\norm}[1]{\left \| #1\right \|}
\numberwithin{equation}{section}
\renewcommand{\k}{\mathtt k}
\newcommand{\alphau}{\underline \alpha} 
\newcommand{\alphao}{\overline \alpha} 
\DeclareMathOperator{\od}{\Lambda}
\newcommand{\ind}[1]{\mathds{1}_{#1}}
\newcommand{\sign}{\operatorname{sign}}
\title[Spectral optimization for weighted anisotropic problems with Robin conditions]{Spectral optimization for weighted anisotropic problems with Robin conditions}
\date{}
\author{Benedetta Pellacci}
\address[B. Pellacci]{Dipartimento di Matematica e Fisica,
Universit\`a della Campania  ``Luigi Vanvitelli'',  via A. Lincoln 5, 81100
Caserta, Italy.}
\email[B. Pellacci]{benedetta.pellacci@unicampania.it}
\author{Giovanni Pisante}
\address[G. Pisante]{Dipartimento di Matematica e Fisica,
Universit\`a della Campania  ``Luigi Vanvitelli'',  via A. Lincoln 5, 81100
Caserta, Italy.}
\email[G. Pisante]{giovanni.pisante@unicampania.it}
\author{Delia Schiera}
\address[D. Schiera]{Departamento de Matemática do Instituto Superior Técnico, 
Universidade de Lisboa, 
Av. Rovisco Pais, 
1049-001 Lisboa, Portugal.}
\email[D. Schiera]{delia.schiera@tecnico.ulisboa.pt}
\begin{document}
\maketitle
\begin{abstract}
We study a weighted eigenvalue problem with anisotropic diffusion in bounded Lipschitz domains $\Omega\subset \R^{N}  $, $N\ge1$, under  Robin boundary conditions, proving the existence  of two  positive eigenvalues $\lambda^{\pm}$ 
respectively associated with a positive and a negative eigenfunction.
Next, we analyze the  minimization  of $\lambda^{\pm}$    with respect to the sign-changing weight, showing that the optimal eigenvalues $\Lambda^{\pm}$ are equal and
the optimal weights are of bang-bang type, namely  piece-wise constant functions, each one taking only two values.
As a consequence,  the problem is equivalent to the minimization  with respect to the subsets of $\Omega$ satisfying a volume constraint. Then, we completely solve the optimization problem in one dimension, in the case of  homogeneous Dirichlet or Neumann conditions, showing new phenomena induced by the presence of the anisotropic diffusion.  
The optmization problem for $\lambda^{+}$ naturally arises  in  the study of the optimal spatial arrangement of resources  for a species to survive in an heterogeneous habitat.
\end{abstract}
\noindent
{\footnotesize \textbf{AMS-Subject Classification}}. 
{\footnotesize 49K15, 49K20,  35J92, 35J70.}\\
{\footnotesize \textbf{Keywords}}. 
{\footnotesize Weighted eigenvalues, population dynamics, survival threshold, symmetrization.}

\section{Introduction}

This paper is focused on the spectral optimization problem associated with the following eigenvalue problem
\begin{equation}\label{equazione} \begin{cases}
-{\rm div}\left((H(\nabla u))^{p-1}H_{\xi}(\nabla u)\right) = \lambda m(x) |u|^{p-2}u & \text{ in } \Omega\\
 H^{p-1}(\nabla u) H_\xi(\nabla u)\cdot n +\k|u|^{p-2}u=0 &\text{ on } \partial \Omega,
\end{cases}\end{equation}
where $\Omega\subset \R^N$ is a Lipschitz bounded domain, $N \ge 1$, $\lambda \in \R$, $p >1$,  $n$ is the outward unit normal on $\partial \Omega$ and
$\tau\cdot\eta$  denotes the scalar product between  two vectors $\tau $ and $\eta$. 
The constant $\k$ runs into the set  $[0, +\infty]$, so we are considering homogeneous Robin boundary conditions, which reduce to homogeneous Neumann boundary conditions if $\k=0$, while we will refer to homogeneous Dirichlet case for $\k=+\infty$ (see 
the beginning of Section  \ref{sec:principal eigenvalues} for more details).
 
The function  $m\in L^{\infty}(\Omega)$ is a sign changing  weight, belonging to the class
 \begin{equation}\label{defM}
\mathcal{M}=\left \{ -\beta \leqslant m(x) \leqslant 1, \; \left|\Omega^{+}_{m}\right|> 0,\; \int_{\Omega} m(x)\leqslant -m_0|\Omega|, \, 
 \right \},
 \end{equation}
where $\beta>0$ is  a constant, $m_{0}\in (-1, \beta )$  if $\k>0$,
$m_{0}\in (0,\beta)$  if $\k=0$ and   $\Omega^{+}_{m}:=\{x \in \Omega : m(x)>0 \}$. 
We will assume that the function  $H: \R^N \to \R$, belonging to  $C^2(\R^N \setminus \{ 0\})$,  is such that 
\begin{align}
\label{h:norma}
&\text{$H \geq 0$   and $H(\xi)=0$ if and only if $\xi=0$}
\\
\label{h:posom}
&H(t\xi)= t H(\xi), \quad \text{ for any $t \ge 0$, $\xi\in \R^{N}$}
\\
\label{h:convex}
&\text{$\{\xi \in \R^{N} : H(\xi) < 1\}$ is uniformly convex, }
\end{align}
where by uniform convexity we mean that the principal curvatures of the boundary are positive and bounded away from zero.

We will be interested in the minimization with respect to 
$m\in \mathcal{M}$  of the  positive principal eigenvalues of \eqref{equazione}, namely the positive eigenvalues associated with an eigenfunction of constant sign.

In the case 
\begin{equation}\label{h:modulo}
H(\xi)=|\xi|
\end{equation}
problem \eqref{equazione} corresponds to the linearization  of the  nonlinear elliptic logistic problem
\begin{equation}\label{prob:nonlineariso} 
\begin{cases}
-\Delta u= \lambda  |u|^{p-2}u(m(x)-|u|^{q}) & \text{ in } \Omega\\
\nabla u\cdot n +\k|u|^{p-2}u=0 &\text{ on } \partial \Omega,
\end{cases}
\end{equation}
with $q>0$.  Positive solutions   are the stationary states of the associated reaction diffusion equation. 
This model, introduced in \cite{fis, kpp}, 
describes the dispersal of a population, with density  $u$,  in a heterogeneous 
environment $\Omega$,   triggered by a brownian motion law, so that each individual 
moves in every direction with the same probability.  
The heterogeneity of  the habitat is modelled by representing $\Omega$ as  union of patches, favourable and hostile zones, corresponding respectively  to the positivity and negativity set of the weight
 $m$, so that $\Omega^{+}_{m}$ can be interpreted as the favorable zone of $\Omega$ (see \cite{bhr}).
  
In this context,   a positive principal eigenvalue $\lambda$ with eigenfunction $\vfi$, which, 
in view of  \eqref{h:modulo}, can be chosen positive, turns out to be a  threshold for the 
survival of the population. So that, minimizing $\lambda$, with respect to the weight or to other features  of the model,  endorses the chances of survival.  Several contributions can be found in the literature, and we refer to the recent papers 
\cite{mazari2022,berecoville,dpv, mapeve1, mapeve2, feve} and references therein, for interesting phenomena such as fragmentation effects,  nonlocal aspects or asymptotic analysis. Let us also mention that similar optimization problems have been addressed in other related contexts, such as in the framework of composite membranes (see  \cite{chanillo, henrot} and the references therein). 
The study of the optimization of $\lambda=\lambda(m)$ with respect to the weight 
$m$ goes back to the contribution by Cantrell and Cosner in \cite{CC89} and it 
is known that the minimum $\od$ is achieved by  an optimal weight of bang-bang 
type, namely a  piece-wise function $m=\ind{D}-\beta\ind{D^{c}}$, 
where $\ind{D}$ denotes the characteristic function of the set $D$ and  $D\subset 
\Omega $ turns out to  be a super-level set of the associated positive eigenfunction (see 
\cite{LLNP, DG, CC91,louyan}). 

Then, natural questions  concerning the qualitative properties of the ``optimal set'' $D$ 
arise.
This is a rather hard task,  mostly open in general, and  the analysis is complete only for
$p=2$ and   in dimension one. This situation has been first investigated in \cite{CC91, 
louyan} for homogeneous Dirichlet or Neumann boundary conditions, 
and the study has been concluded in  \cite{LLNP, hintermuller}, where 
 it is proved that   $D$  is connected, so that it is
 an  interval and there exist a constant $\overline{\k}$ such that for every $
 \k>\overline{\k}$,  $D$ is centred  in the middle of the interval $\Omega$, while for  $\k<\overline{\k}$,  $D$ sticks to the boundary.
  For $p\neq2$, the same analysis has been performed  considering  homogeneous Neumann in \cite{DG}.

When the population adopts different diffusion strategies, one is naturally lead to consider different differential operators in the model.  For instance, fractional diffusion operators
have been investigated in \cite{cadiva, dpv, peve} (see also the references therein).
In particular, for spectral fractional laplacian under  homogeneous Neumann boundary conditions  the optimal weight is of bang-bang type (\cite{peve}), while the shape and localization of the optimal set $D$ are still unknown even in dimension one. 

Here, we are focused on anisotropic diffusions, thinking of the population dispersing in the habitat with different probabilities depending on the direction (see\cite{Bouin} for a related model), so that the diffusion operator is given by the so called anisotropic $p-$Laplace operator
\[ 
\Delta_{H, p}u :={\rm div}\left((H(\nabla u))^{p-1}H_{\xi}(\nabla u)\right) .
\]
Eigenvalues' properties when $m\equiv 1$  have been widely studied under various 
boundary conditions, assuming that $H(t\xi)=|t|H(\xi)$  for  every $t\in \R$, in place of 
\eqref{h:posom}
(see e.g. \cite{BFK, dellapietraDBG, GavitoneTrani} and the references therein).
From this perspective, we tackle the case of an indefinite eigenvalue problem under
general Robin boundary conditions. 

As a first result, we establish the existence of a positive principal eigenvalue for every fixed 
$m\in {\mathcal M}$ by minimizing  a suitable Rayleigh quotient restricted to the cone of positive functions (see Proposition \ref{prop:lambda}).

Even in this study a novelty arises: as we want to include the study of an anisotropic 
one-dimensional diffusion operator,  we  assume $H(t\xi)=tH(\xi)$  for  every $\xi\in \R^{N}
$, but just for $t\geq 0$, so that $H$ is not assumed to be a norm as it may not be even. 
 This has significant consequences.
For instance, if one minimizes the associated Rayleigh quotient in the whole Sobolev space,
it is not possible to deduce the sign of the associated  eigenfunction a posteriori (see Remark \ref{rem:sign}).
 This is the reason why we restrict 
the minimization problem in the cone of non-negative functions.

As a matter of fact, there exist two positive eigenvalues $\lambda^{\pm}$
with associated eigenfunctions of constant sign (See Section \ref{sec:principal eigenvalues}). 
One can be obtained through minimization on the cone of the positive functions, the other on the cone of negative ones (see Proposition \ref{prop:exlameno}). This phenomenon, due to the fact that $H$ is not supposed to be even, resembles what occurs  in the context of fully nonlinear operators (see \cite{BD}, \cite{QS} and references therein).

In analogy to what happens for  isotropic diffusions, we 
prove that  $\lambda^{+}(m) $ is a threshold for the existence of positive solutions of the nonlinear logistic elliptic problem (see Theorem \ref{soglia}). 
As a consequence, minimizing $\lambda^{+}(m)$ with respect to $m\in {\mathcal M}$
consists in finding the best spatial arrangements of resources in order to 
endorse the chances of survival of a population living in  $\Omega$.

In this direction we will first prove the following result. 
\begin{theorem}\label{thm:superlevel set}
Assume that $H\in C^2(\R^N \setminus \{ 0\})$ satisfies hypotheses \eqref{h:norma}, \eqref{h:posom} and \eqref{h:convex}. 
The minimization problem
\begin{equation}\label{Lambda}
\od^+:=\inf_{m \in \mathcal{M}} \lambda^+(m) 
\end{equation}
has a solution  given by 
$m(x)=\ind{D}-\beta\ind{D^{c}} $,  $(D^{c}=\Omega\setminus D)$.
If $\varphi=\varphi(m)$ is a positive eigenfunction associated with $\Lambda^+=\lambda^+(m)$, than the set $D$ is a super level set of $\varphi$, i.e. for a suitable $t>0$
\begin{equation}\label{Dset}
 D=\{ x \in \Omega \;:\; \varphi(x) >t\}
\end{equation}
and every  level set of $\varphi$ has zero measure.
\end{theorem}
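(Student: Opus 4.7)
The plan combines the direct method of the calculus of variations for existence with a linearization trick for the bang-bang structure, and a Stampacchia-type argument for the level sets. The Rayleigh characterization of $\lambda^+(m)$ on the cone of non-negative functions in $W^{1,p}(\Omega)$, established in the preceding section, plays the central role, as it displays $\lambda^+(m)$ as a decreasing functional of the denominator $\int_\Omega m\,|\varphi|^p$, which is linear in $m$ for fixed $\varphi$.

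For the existence of a minimizer I would take a minimizing sequence $m_n\in\mathcal{M}$ with associated normalized positive eigenfunctions $\varphi_n$ realizing
\[
\lambda^+(m_n)=\frac{\int_\Omega H(\nabla\varphi_n)^{p}\,dx+\k\int_{\partial\Omega}\varphi_n^{p}\,d\sigma}{\int_\Omega m_n\,\varphi_n^{p}\,dx}.
\]
Boundedness of $\lambda^+(m_n)$ and the equivalence of $H$ with the Euclidean norm force $\varphi_n$ to stay bounded in $W^{1,p}(\Omega)$, so, up to subsequences, $\varphi_n\rightharpoonup\varphi^*$ weakly in $W^{1,p}(\Omega)$ and strongly in $L^p(\Omega)$ and $L^p(\partial\Omega)$, while $m_n\rightharpoonup^\ast m^*$ in $L^\infty(\Omega)$. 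The constraints $-\beta\leq m\leq 1$ and $\int_\Omega m\leq -m_{0}|\Omega|$ defining $\mathcal M$ are stable under weak-$*$ convergence, the convexity of $\xi\mapsto H(\xi)^p$ yields lower semicontinuity of the numerator, and the weak-$*$/strong pairing makes the denominator pass to the limit; moreover $|\Omega^+_{m^*}|>0$ because otherwise the limiting denominator would be non-positive and $\od^+$ would be infinite. Hence $\lambda^+(m^*)\leq \od^+$ and $m^*$ attains the infimum.

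To obtain the bang-bang shape I would freeze a positive eigenfunction $\varphi^*$ of $m^*$ and observe that, by the variational characterization, for every admissible $\tilde m\in\mathcal M$ with $\int_\Omega \tilde m\,(\varphi^*)^p>0$,
\[
\lambda^+(\tilde m)\leq \frac{\int_\Omega H(\nabla\varphi^*)^{p}\,dx+\k\int_{\partial\Omega}(\varphi^*)^{p}\,d\sigma}{\int_\Omega \tilde m\,(\varphi^*)^{p}\,dx},
\]
with equality at $\tilde m=m^*$. Since $m^*$ is a minimizer, it must maximize the linear functional $\tilde m\mapsto \int_\Omega \tilde m\,(\varphi^*)^p$ over $\mathcal M$. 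The bathtub principle then identifies this maximizer, up to null sets, as $\ind{D}-\beta\ind{D^c}$ with $D=\{\varphi^*>t\}$, where $t>0$ is the unique threshold for which the integral constraint becomes active; the saturation $\int_\Omega m^*=-m_{0}|\Omega|$ follows because otherwise one could raise $\tilde m$ on a set where $\varphi^*$ is large, strictly increasing the denominator and strictly decreasing $\lambda^+$.

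Finally, the fact that every level set of $\varphi^*$ has zero measure comes from Stampacchia's theorem applied to $\varphi^*\in W^{1,p}(\Omega)$: on $\{\varphi^*=c\}$, $c>0$, one has $\nabla\varphi^*=0$ almost everywhere, so the Euler--Lagrange equation reduces to $\od^+\,m^*\,c^{p-1}=0$ there, forcing $m^*=0$ a.e. on this set; but $m^*\in\{1,-\beta\}$ almost everywhere, so the set is negligible. The main obstacle I foresee is justifying lower semicontinuity of the anisotropic energy together with the compactness of the eigenfunctions as the weight varies, and ensuring that the volume constraint is saturated so that the bathtub principle singles out a unique super-level threshold; both are handled through the strict monotonicity of the Rayleigh quotient with respect to $m$.
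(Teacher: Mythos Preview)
Your existence argument via the direct method and the bang-bang characterization via the bathtub principle are both correct and match the paper's approach; the paper is terser, simply invoking the Poincar\'e-type inequality of Derlet--Gossez--Tak\'a\v{c} for compactness and the bathtub principle as in Lieb--Loss for the structure of the optimal weight, but the skeleton is identical.

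The gap is in your last step, on the negligibility of level sets. Your Stampacchia argument correctly gives $\nabla\varphi^*=0$ a.e.\ on $E_c:=\{\varphi^*=c\}$, and hence the flux $V:=H(\nabla\varphi^*)^{p-1}H_\xi(\nabla\varphi^*)$ vanishes a.e.\ on $E_c$. But concluding that the \emph{divergence} of $V$ vanishes a.e.\ on $E_c$ is not automatic: the equation $-\operatorname{div}V=\Lambda^+ m^*(\varphi^*)^{p-1}$ holds only in the distributional sense, and deducing that the right-hand side vanishes on $E_c$ requires $V\in W^{1,1}_{\mathrm{loc}}(\Omega;\R^N)$, so that Stampacchia can be applied once more, this time to each component of $V$. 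In dimension one this is free, since $V'=-\Lambda^+ m^*(\varphi^*)^{p-1}\in L^\infty$ gives $V\in W^{1,\infty}$. In higher dimensions, however, controlling only $\operatorname{div}V$ says nothing about the full gradient of $V$; the needed Sobolev regularity of the stress field for degenerate anisotropic operators is a genuine result, and the paper obtains it by invoking the interior regularity theory of Antonini--Ciraolo--Farina. Once that regularity input is supplied your argument goes through, but as written the line ``the Euler--Lagrange equation reduces to $\Lambda^+ m^* c^{p-1}=0$ there'' is unjustified for $N\ge 2$.
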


Theorem \ref{thm:superlevel set} proves that, as shown in several contexts (see for instace \cite{CC91, louyan, LLNP, MazNadPri2020, MazNadPri2023, mazari2023}), also in the anisotropic case 
optimizers for $\Lambda^+$ are of bang-bang 
type.

In particular, the minimization problem \eqref{Lambda} is equivalent to the minimization with respect to the subsets of $\Omega$ satisfying a volume constraint (see Remark \ref{rem:min set}), and one is naturally lead to study qualitative properties of optimal sets. 
With this goal in mind, we consider homogeneous Dirichlet or Neumann boundary conditions and we restrict ourselves to dimension one, where $H$ satisfying \eqref{h:norma}, \eqref{h:posom} and \eqref{h:convex},
has necessarily the expression
\begin{equation}\label{h:1dim}
 H(x)=
\begin{cases}
ax & \text{ if } x \ge 0\\
-bx & \text{ if } x < 0,
\end{cases} 
\end{equation}
with $a\neq b$, otherwise no anisotropy occurs.
We will show the following result.

\begin{theorem}\label{thm:localization}
Let $N=1$, $\Omega=(0,1)$ and assume $H$ is of the form \eqref{h:1dim}. 
\\
Then, the super-level set $D$ is an interval. In addition
\begin{enumerate}
\item If $\k=0$,  $D=(0, |D|)$ if $a>b$, and  $D=(1-|D|, 1)$ if $b >a$.
If  $a=b$,   $(0, |D|)$ and $(1-|D|, 1)$ are both optimal sets. 
\item If $\k=+\infty$, then $D$ is given by 
\begin{equation}\label{localization D} D=\left( \frac{(1-|D|)a}{a+b}, \frac{|D|b+a}{a+b} \right). \end{equation}
\end{enumerate}
\end{theorem}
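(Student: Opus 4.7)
The plan is to handle the two cases by distinct methods: for the Neumann case I would use a monotone rearrangement argument exploiting the one-dimensional anisotropy, while for the Dirichlet case I would first establish unimodality of the positive principal eigenfunction and then identify the endpoints of $D$ through first integrals of the ODE.

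For part (1), assume without loss of generality $a \ge b$. For a non-negative $u \in W^{1,p}(0,1)$, let $u^{\#}$ denote its monotone decreasing rearrangement on $(0,1)$, and let $m^{\#} := \ind{(0,|D|)} - \beta \ind{(|D|,1)}$. The key comparison for the numerator is
\[
\int_0^1 H\bigl((u^{\#})'\bigr)^p\,dx \;=\; b^p \int_0^1 |(u^{\#})'|^p\,dx \;\le\; b^p \int_0^1 |u'|^p\,dx \;\le\; \int_0^1 H(u')^p\,dx,
\]
whose three steps use in order that $(u^{\#})'\le 0$ so $H((u^{\#})')^p = b^p|(u^{\#})'|^p$, the one-dimensional monotone Pólya-Szegő inequality, and the pointwise bound $H(\xi)^p \ge b^p|\xi|^p$ (valid since $a \ge b$). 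On the denominator side, writing $\int m u^p = \int_D u^p - \beta \int_{D^c} u^p$ and invoking the Hardy-Littlewood rearrangement inequality on each piece yields $\int_0^1 m^{\#}(u^{\#})^p\,dx \ge \int_0^1 m u^p\,dx$. Combining these in the Rayleigh quotient that characterizes $\lambda^+$ (Proposition \ref{prop:lambda}) gives $\lambda^+(m^{\#}) \le \lambda^+(m)$, identifying $D=(0,|D|)$ as an optimizer. When $a > b$ strictly, $H(\xi)^p > b^p|\xi|^p$ on $\{\xi>0\}$ forces $u'\le 0$ a.e.\ at any optimum, giving uniqueness; the case $b > a$ is fully symmetric, and for $a=b$ the isotropic symmetry yields both $(0,|D|)$ and $(1-|D|,1)$ as optimal sets.

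For part (2), let $\varphi$ be the positive principal eigenfunction associated with an optimal $m$. The main structural step is to show $\varphi$ is unimodal, so that by Theorem \ref{thm:superlevel set} the set $D = \{\varphi > t\}$ is a single interval $(\alpha,\beta) \subset (0,1)$. I would argue this by noting that in a hypothetical multi-bump positive eigenfunction the continuity of the conjugate momentum $\Phi(\varphi') := H^{p-1}(\varphi')H_\xi(\varphi')$ across the interfaces where $m$ jumps between $1$ and $-\beta$ forces all local maxima of $\varphi$ to share a common value $M$ and all intermediate local minima to share a common value $\varphi_m$; a scaling of the resulting periodic-like configuration then produces an eigenvalue strictly exceeding that of the single-bump configuration, contradicting the minimality of $\lambda^+$. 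Once $D$ is known to be a single interval, the formula \eqref{localization D} follows from explicit first integrals: multiplying the ODE by $\varphi'$ and using the Euler identity $\Phi(\xi)\xi = H^p(\xi)$ (which holds by the positive $1$-homogeneity of $H$) gives, after integration,
\[
H^p(\varphi') + \frac{\lambda}{p-1}\varphi^p = \frac{\lambda}{p-1}M^p \quad \text{on } (\alpha,\beta),
\]
and analogously $H^p(\varphi') = \frac{\beta\lambda}{p-1}\varphi^p + C$ on $(0,\alpha)$ and on $(\beta,1)$, with a common constant $C = \frac{\lambda}{p-1}(M^p - (1+\beta)t^p)$ obtained by matching $\Phi(\varphi')$ at $x=\alpha$ and $x=\beta$. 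Using $H(\varphi')= a\varphi'$ on $(0,\alpha)$ (where $\varphi'>0$) and $H(\varphi')=-b\varphi'$ on $(\beta,1)$ (where $\varphi'<0$), separation of variables and integration of $d\varphi/(\frac{\beta\lambda}{p-1}\varphi^p + C)^{1/p}$ from $0$ to $t$ on each outer region produces identical improper integrals, so that $\alpha/a = (1-\beta)/b$; combined with $\beta - \alpha = |D|$ this is exactly \eqref{localization D}.

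The principal obstacle is establishing unimodality in the Dirichlet case, where the rearrangement trick from part (1) is unavailable and one must rule out higher-bump positive eigenfunctions directly at the level of the ODE, using the simplicity of $\lambda^+$ and the scaling argument outlined above. The Hardy-Littlewood and monotone Pólya-Szegő inputs in the Neumann case, and the first-integral computations in the Dirichlet case, are then relatively routine once the correct reduction is in place.
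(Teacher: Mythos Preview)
Your Neumann argument is correct and a bit more direct than the paper's. The paper first proves, via monotone rearrangement and the equality case in Proposition \ref{equality polya}, that the optimal eigenfunction is monotone (Theorem \ref{D:intervallo}), and only then decides between the two endpoints by the reflection $\psi(x)=\varphi(1-x)$. You instead rearrange to $(0,|D|)$ in one stroke using the pointwise bound $H(\xi)\ge b|\xi|$ for $a\ge b$, which lets you bypass the equality-case analysis entirely; the uniqueness clause for $a>b$ follows exactly as you say.

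For Dirichlet, your first-integral computation locating the endpoints of $D$ is correct and genuinely different from the paper's route, which applies an anisotropic Polya--Szeg\H{o} inequality together with its rigidity case (Proposition \ref{prop: polya anis}) to force $\varphi(x+\tfrac{a}{a+b})=\varphi^\star(x)$. Your ODE argument is more elementary and avoids building the anisotropic symmetrization.

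The gap is in your unimodality step. The ``scaling of the resulting periodic-like configuration'' does not work as stated: a multi-bump Dirichlet eigenfunction is \emph{not} periodic, because the outermost pieces descend to $0$ at the boundary while interior oscillations bottom out at $\varphi_m>0$, so no single bump rescales to a Dirichlet competitor on $(0,1)$ with the same weight structure. Fortunately your own first-integral machinery already delivers the contradiction without any scaling. On each component of $D^c$ the quantity $H^p(\varphi')-\tfrac{\beta\lambda}{p-1}\varphi^p$ is constant, and matching through any component of $D$ (where $H^p(\varphi')+\tfrac{\lambda}{p-1}\varphi^p$ is constant, equal to $\tfrac{\lambda}{p-1}M^p$) shows that all these constants coincide with a common value $C$. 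Evaluating at $x=0$ gives $C=H^p(\varphi'(0))\ge 0$; evaluating at any interior critical point $x_c$ lying in $D^c$ gives $C=-\tfrac{\beta\lambda}{p-1}\varphi(x_c)^p<0$. Hence $\varphi$ has no critical point in $D^c\cap(0,1)$, so $\varphi$ is strictly monotone on each outer piece. Inside $D$ the first integral forces every critical value to equal $M$, which rules out any interior minimum there (else $\varphi$ would be constant on a subinterval of $D$, contradicting the equation since $m=1$ and $\varphi>0$). This yields unimodality. The paper instead obtains unimodality from the equality case of the monotone Polya inequality (Theorem \ref{D:intervallo} and Proposition \ref{equality polya}); your ODE route, once repaired as above, is a legitimate and arguably simpler alternative.
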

\begin{figure}
 \centering 
\begin{subfigure}{0.4 \textwidth}
    \centering
  \includegraphics[width=1.1\linewidth]{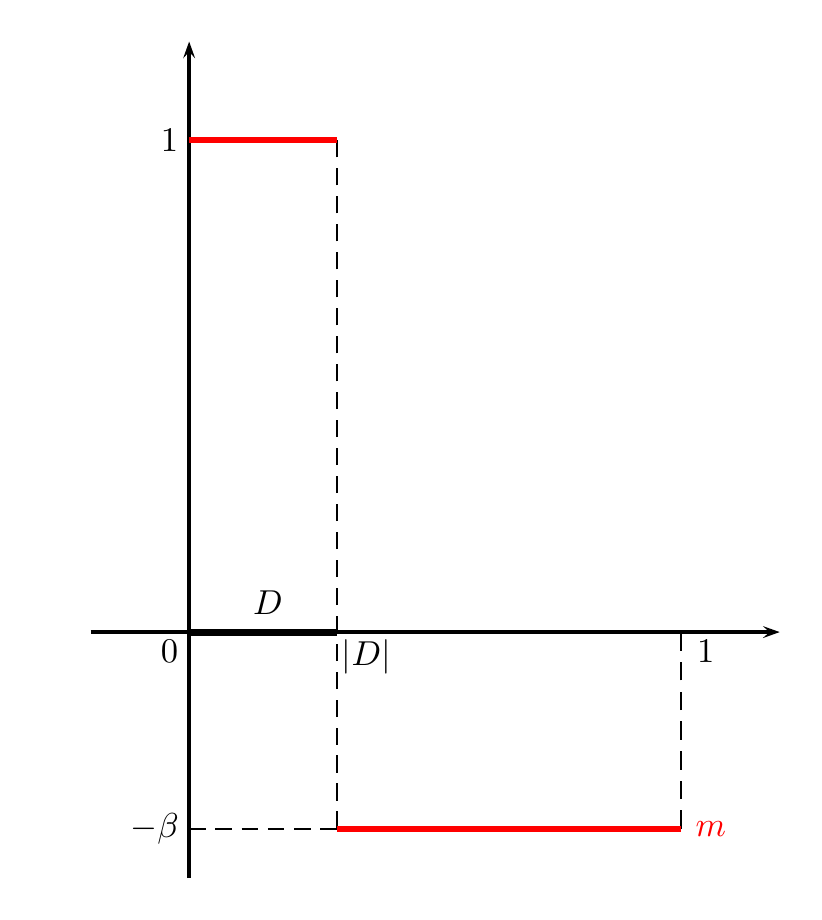}
\caption{$\k=0$ and $a>b$.}
\end{subfigure}%
\hspace{1.3cm}
\begin{subfigure}{0.4 \textwidth}
    \centering
 \includegraphics[width=1.1\linewidth]{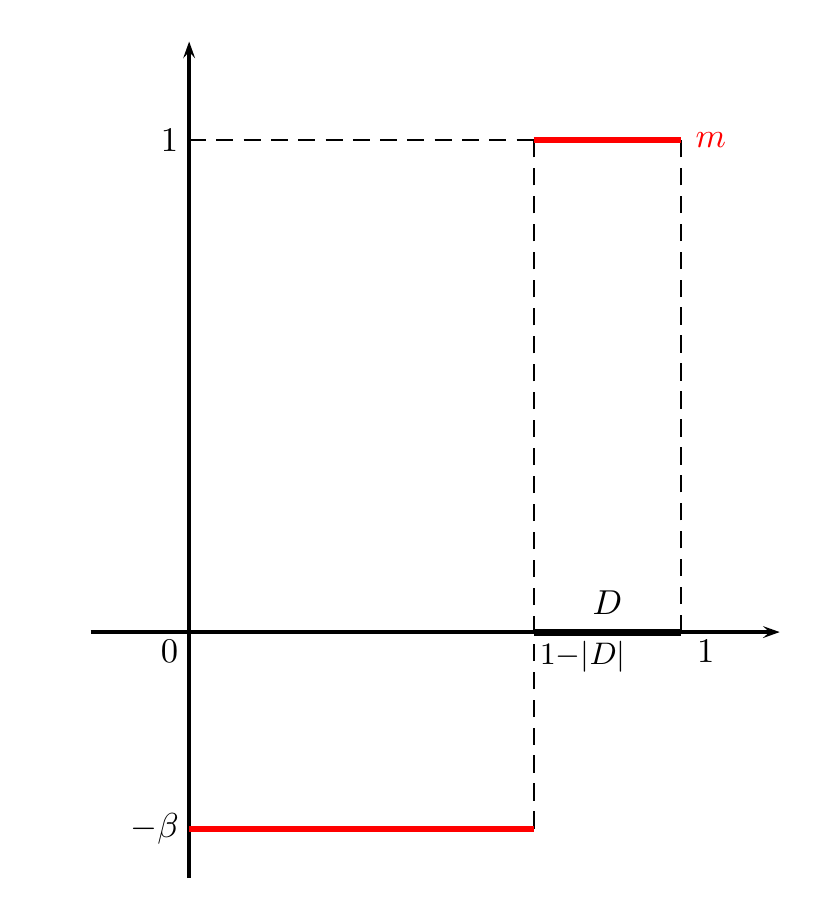}
\caption{$\k=0$ and $a<b$.}
\end{subfigure}
\caption{A representation of the optimal weight $m$ for $\Lambda^+$ in dimension one with Neumann boundary conditions ($\k=0$) (see Theorem \ref{thm:localization}). If $a=b$ the two weights above are both optimal.}\label{fig:loc}
\end{figure}
\begin{figure}
 \centering 
\begin{subfigure}{0.4 \textwidth}
    \centering
  \includegraphics[width=1.1\linewidth]{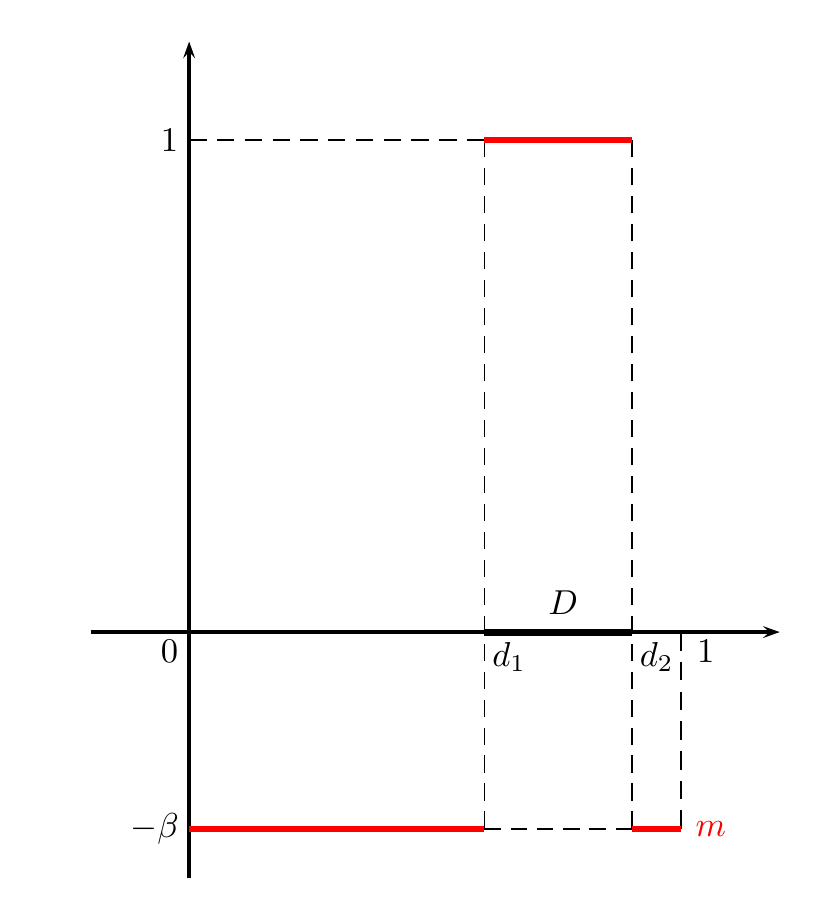}
\caption{Anisotropic case:  $a>b$.}
\end{subfigure}%
\hspace{1cm}
\begin{subfigure}{0.4 \textwidth}
    \centering
 \includegraphics[width=1.1\linewidth]{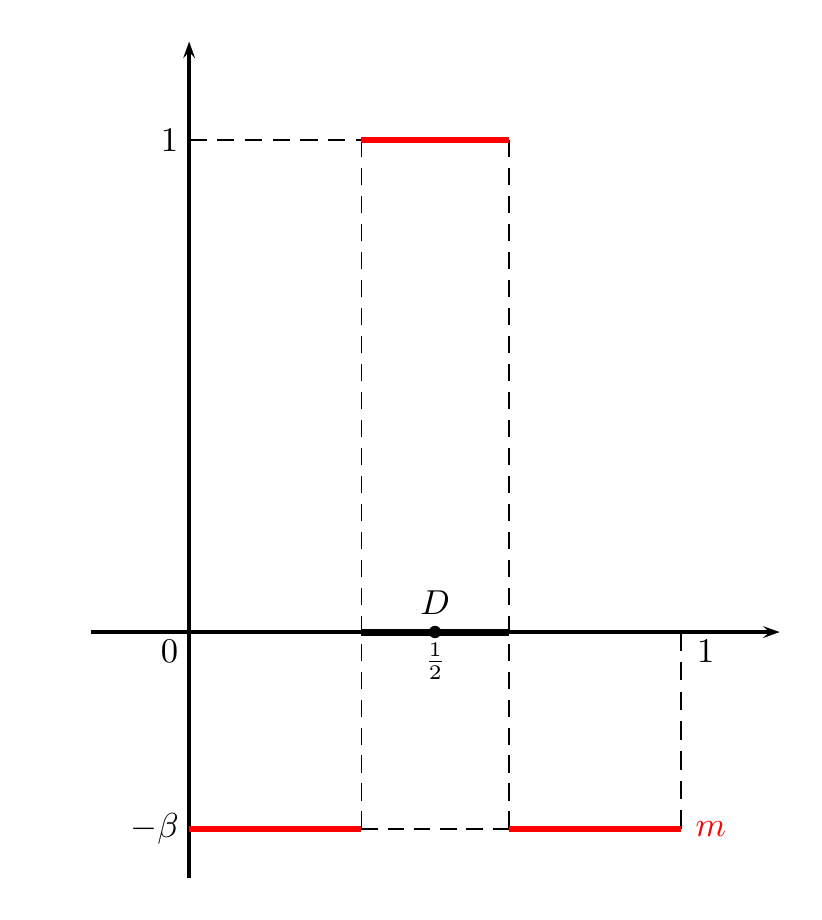}
\caption{Isotropic case: $a=b$.}
\end{subfigure}
\caption{A representation of the optimal weight $m$ for $\Lambda^+$ in dimension one with Dirichlet boundary conditions ($\k=+\infty$), as found in Theorem \ref{thm:localization}. We denote by $D$ the optimal set, $d_1=\frac{(1-|D|) a}{a+b}$, $d_2=\frac{|D|b+a}{a+b}$. In the isotropic case $a=b$ the optimal set is centered in $1/2$. }\label{fig:loc2}
\end{figure}

This result enlighten the effect of the anisotropy on the location of the optimal interval $D$.
Indeed, in the case of homogeneous Dirichlet boundary conditions the anisotropy produces 
a shift of $D$, which turns out to be the one-dimensional anisotropic ball centered at $1/2$, namely
the interval given in \eqref{localization D}. In the case of homogeneous Neumann boundary conditions the anisotropy decides to which extremum the interval $D$ should  stick, see Figures \ref{fig:loc} and \ref{fig:loc2}.

In order to prove Theorem \ref{thm:localization} we will first perform a suitable monotone rearrangement, in the spirit of \cite{LLNP}, to show that the eigenfunction $\vfi$ has 
a unique maximum point, so that
the super-level set  $D$ is an interval. Then, the analysis is completed in the case of homogeneous Neumann and Dirichlet boundary conditions, $\k=0$, $\k=\infty$ respectively.
Let us observe that, when $H(\xi)=|\xi|^{2}$ and Robin boundary conditions are imposed,  the location of $D$  is detected by directly computing and comparing the eigenvalues associated with the possible optimal sets (see \cite{hintermuller}). Here, this comparison appears particularly involved, due to the presence of the anisotropy.
However, when $\k=0$, the monotone rearrangements argument immediately implies that $\vfi$ is monotone in the whole $(0,1)$. Then,  the conclusion follows by direct comparison of the Rayleigh quotient.
In the case of homogeneous Dirichlet boundary conditions,
the uniqueness of the maximum point of the eigenfunction $\vfi$  allows us to manage the equality case in the Polya inequality for anisotropic symmetrizations (see  Proposition \ref{prop: polya anis}) yielding the conclusion.   

We  expect that a suitable version of Theorem \ref{thm:localization}  should also hold in the general case of Robin boundary conditions, as for the isotropic case.

We believe that the case of homogeneous Dirichlet boundary conditions can be handled 
directly exploiting anisotropic symmetrization arguments without passing by monotone rearrangements, and using isoperimetric inequalities, although this would require a careful adaptation in order to handle the case of non-even anisotropy (see Remark \ref{rem:coarea}) . On the other hand, since  monotone rearrangements are used in order to get that $D$ is an interval in the general case of Robin boundary conditions, as a byproduct we have that the eigenfunction $\vfi$ has a unique maximum point.
This allows us to use an elementary  one-dimensional rigidity result for the equality case in the Polya inequality (see Proposition \ref{prop: polya anis}), to obtain the localization of $D$.

Starting the analysis in the cone of negative functions, it is possible 
to show the existence of a positive eigenvalue $\lambda^-(m)$, with a unique negative
normalized eigenfunction. Then, one can perform the optimization of $\lambda^-(m)$
with respect to the weight $m$, namely studying  the problem 
\[ 
\od^-:=\inf_{m \in \mathcal{M}} \lambda^-(m).
\]
It is possible to give the analogous version of Theorem \ref{thm:localization} for the optimal set associated with $\Lambda^{-}$.
Also, we show that under a suitable symmetry assumption on the domain $\Omega$ one has 
the following result
\begin{theorem}\label{thm:Lambda+=Lambda-}
We assume that $\Omega$ has a centre of symmetry, i.e. there exists $x_{0}\in \R^{N}$ such that $2x_{0}-x \in\Omega$ for every $x\in \Omega$.  Then, 
\[ 
\Lambda^+=\Lambda^-. 
\]
\end{theorem}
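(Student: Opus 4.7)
The plan is to build a measure-preserving symmetry that swaps positive and negative test functions while keeping the Rayleigh quotient invariant, exploiting the symmetry of $\Omega$ together with the interplay between a change of sign and a point reflection. Set $\sigma(x)=2x_0-x$, so that $\sigma$ is an involutive isometry mapping $\Omega$ onto itself. For any weight $m\in\mathcal M$, define $\tilde m=m\circ\sigma$; the change of variable $y=\sigma(x)$ shows that the pointwise bounds, the integral constraint $\int_\Omega \tilde m\le -m_0|\Omega|$, and the positivity of $|\Omega^+_{\tilde m}|$ are preserved, so $\tilde m\in\mathcal M$.

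Given an admissible positive function $u$ for $\lambda^+(m)$ (from the variational characterization of Proposition \ref{prop:lambda}), I would set
\begin{equation*}
v(x):=-u(\sigma(x)),\qquad x\in\Omega,
\end{equation*}
which is strictly negative and lies in the natural Sobolev space. The key (and only nontrivial) computation is that the chain rule gives
\begin{equation*}
\nabla v(x)=-(-\nabla u)(\sigma(x))=(\nabla u)(\sigma(x)),
\end{equation*}
so that $H(\nabla v(x))=H\bigl((\nabla u)(\sigma(x))\bigr)$ without ever evaluating $H$ at an opposite vector. This is precisely where the non-evenness of $H$ is circumvented: the sign flip in $-u$ combines with the sign flip coming from the reflection so that only $\nabla u\circ\sigma$ appears.

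A change of variable then shows that
\begin{equation*}
\int_\Omega H(\nabla v)^p\,dx=\int_\Omega H(\nabla u)^p\,dx,\qquad \int_{\partial\Omega}|v|^p\,d\mathcal H^{N-1}=\int_{\partial\Omega}|u|^p\,d\mathcal H^{N-1},
\end{equation*}
and, crucially,
\begin{equation*}
\int_\Omega \tilde m\,|v|^p\,dx=\int_\Omega m(\sigma(x))\,u(\sigma(x))^p\,dx=\int_\Omega m\,u^p\,dx,
\end{equation*}
so the Rayleigh quotient defining $\lambda^+(m)$ at $u$ coincides with the one defining $\lambda^-(\tilde m)$ at $v$. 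Taking the infimum over admissible $u$ in the positive cone yields $\lambda^-(\tilde m)\le\lambda^+(m)$ and hence, passing to the infimum over $m\in\mathcal M$ and using that $m\mapsto\tilde m$ is a bijection of $\mathcal M$, $\Lambda^-\le\Lambda^+$.

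The reverse inequality is obtained by the completely symmetric construction: starting from an admissible negative $v$ for $\lambda^-(m)$, set $u(x):=-v(\sigma(x))>0$ and repeat the argument verbatim to get $\lambda^+(\tilde m)\le\lambda^-(m)$, hence $\Lambda^+\le\Lambda^-$. I do not anticipate a real obstacle: the whole argument reduces to the gradient identity $\nabla v=(\nabla u)\circ\sigma$ and the invariance of $\mathcal M$ under $m\mapsto m\circ\sigma$; the only point requiring some care is verifying that both variational characterizations (Propositions \ref{prop:lambda} and \ref{prop:exlameno}) are indeed attained on the corresponding cones of constant-sign functions, which is already established earlier in the paper.
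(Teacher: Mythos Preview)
Your proposal is correct and follows essentially the same route as the paper: both proofs rely on the reflection $\sigma(x)=2x_0-x$ combined with a sign change $v=-u\circ\sigma$, the key identity $\nabla v=(\nabla u)\circ\sigma$ (which avoids evaluating $H$ at a negated vector), and the observation that $m\mapsto m\circ\sigma$ preserves $\mathcal M$. The only cosmetic difference is that the paper works directly with the optimal eigenfunction $\vfi_+$ realizing $\Lambda^+$, whereas you pass through arbitrary admissible test functions before taking infima.
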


We stress that our proof of Theorem \ref{thm:Lambda+=Lambda-} heavily uses the monotonicity properties of the eigenfunctions associated  with the optimal eigenvalues. We think that the equality does not hold for any $\lambda^{\pm}(m)$ as the monotonicity properties are not expected to hold.

The paper is organized as follows. In the next section we discuss the emergence of $\lambda^{+}(m)$ as a threshold for the existence of positive solution of the associated nonlinear elliptic problem. In Section \ref{sec:principal eigenvalues} we set up the eigenvalue problem for a fixed weight $m$, we show the existence of the principal eigenvalues  $\lambda^{+}(m)$ and $\lambda^{-}(m)$ in Propositions \ref{prop:lambda} and \ref{prop:exlameno},  then we give the proof of Theorem \ref{thm:superlevel set} as well as the counterpart for $\lambda^{-}(m)$. The proofs of Theorems \ref{thm:localization} and \ref{thm:Lambda+=Lambda-}  are given in Section \ref{localization}. Finally in Section \ref{rearrangements} we prove some necessary technical tools about one-dimensional anisotropic rearrangements inequalities.

\section{The nonlinear Problem }\label{nonlinear}

In the whole paper we will assume that $H$ satisfies \eqref{h:norma}, \eqref{h:posom}. This type of functions is usually referred to as {\it positively homogeneous Minkowski norm}. It can be easily seen that $H$ satisfies the following growth conditions
\begin{equation}\label{h:growth}
\alphau \abs{\xi} \leqslant H(\xi) \leqslant \alphao \abs{\xi}, \qquad \text{with $0<\alphau<\alphao$.}
\end{equation}
Under the assumption \eqref{h:convex}, it can also be proved that for any $p>1$ the function $H^p$ is strictly convex and there exist positive constants $\gamma, \Gamma$ such that 
\begin{equation}
\label{H-convexity}
 (\text{Hess}(H^p)(\xi))_{ij} \zeta_i \zeta_j \ge \gamma  |\xi|^{p-2} |\zeta|^2,
\qquad 
\sum_{i, j=1}^N \abs{(\text{Hess}(H^p)(\xi))_{ij}} \leqslant \Gamma |\xi|^{p-2} 
\end{equation}
for any $\xi \in \R^N \setminus\{0\}$ and $\zeta\in \R^N$, see \cite[Proposition 3.1]{CFV14_1} for dimension $N \ge 2$, whereas for $N=1$ these estimates follow by direct computations.

Let us start by giving the following definition. 

\begin{definition}\label{def:lambda}
Let $m\in {\mathcal M}$. We set 
\[
\lambda^+(m)=\inf_{u\in {\mathcal S_{\k, m}^+}}{\mathcal R_{\k,m}}(u)
\]
where the Rayleigh  quotient ${\mathcal R_{\k,m}}$  and the set 
${\mathcal S_{\k, m}^+}$
are defined  in a different way depending on $\k$.
If $\k<+\infty $ they are given by
\[
{\mathcal R_{\k,m} }(u)
:=
\dfrac{\int_\Omega H(\nabla u)^pdx+\k\int_{\partial \Omega}|u|^{p}d\sigma}{\int_\Omega m(x)|u|^{p}dx}
\] 
\[
{\mathcal S_{\k, m}^+} 
:=\left\{u\in W^{1, p}(\Omega),\ u \ge 0, \, \int_\Omega m(x)|u|^p\,dx>0\right\}, 
\]
and for $\k=+\infty$, 
\[
{\mathcal R_{\infty,m}}(u):=
\dfrac{\int_\Omega H(\nabla u)^p}{\int_\Omega m(x)|u|^{p}dx}
\qquad
{\mathcal S_{\infty, m}^+}:=\left\{  u\in W^{1, p}_{0}(\Omega),\ u \ge 0, \, \int_\Omega m(x)|u|^p\,dx>0\right\}.
\]
\end{definition}

A similar definition can be given choosing ${\mathcal S}^{-}_{\k,m}$ (see Definition \ref{def:lambda-}).

In the next theorem we show that, as in the isotropic case, $\lambda^{+}(m)$ naturally arises as a threshold for the existence of positive solutions of the following logistic type nonlinear problem 
\begin{equation}\label{eq:nonlinear} \begin{cases} 
- \Delta_{H, p}u = \lambda |u|^{p-2} u (m - |u|^q) &\text{ in } \Omega,\\
 H(\nabla u)^{p-1} H_\xi(\nabla u)\cdot n +\k|u|^{p-2}u=0 &\text{ on } \partial \Omega,
\end{cases}\end{equation}
with $q>0$, that is the counterpart in the anisotropic case of problem \eqref{prob:nonlineariso}. 

\begin{theorem}\label{soglia}
There exists a  nonnegative nontrivial bounded solution $u$
to problem \eqref{eq:nonlinear}  if and only if $\lambda > \lambda^+(m)$. Moreover, 
$u\in C^{1, \alpha}(\Omega)$ is the  unique nonnegative nontrivial solution. 
\end{theorem}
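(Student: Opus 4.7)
For the \emph{necessity} direction, I would test equation \eqref{eq:nonlinear} against $u$ itself and exploit the boundary condition to get the energy identity
\[
\int_\Omega H(\nabla u)^p\,dx + \k\int_{\partial\Omega} u^p\,d\sigma \;=\; \lambda\int_\Omega m u^p\,dx - \lambda\int_\Omega u^{p+q}\,dx.
\]
For a bounded nontrivial nonnegative $u$ the left-hand side is strictly positive; a comparison/sweeping argument with large constants rules out $\lambda\le 0$, so $\lambda>0$. The identity then forces $\int_\Omega m u^p > \int_\Omega u^{p+q} > 0$, hence $u \in \mathcal{S}^+_{\k,m}$, and dividing by $\int_\Omega m u^p$ yields
\[
\lambda^+(m) \;\leq\; \mathcal{R}_{\k,m}(u) \;=\; \lambda - \lambda\,\frac{\int_\Omega u^{p+q}\,dx}{\int_\Omega m u^p\,dx} \;<\; \lambda.
\]

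For the \emph{existence} direction, given $\lambda > \lambda^+(m)$ I would minimize the energy
\[
J(u) = \frac{1}{p}\int_\Omega H(\nabla u)^p\,dx + \frac{\k}{p}\int_{\partial\Omega} u^p\,d\sigma - \frac{\lambda}{p}\int_\Omega m u^p\,dx + \frac{\lambda}{p+q}\int_\Omega u^{p+q}\,dx
\]
on the closed convex cone $K$ of nonnegative functions in $W^{1,p}(\Omega)$ (replaced by $W^{1,p}_0(\Omega)$ when $\k = +\infty$). Coercivity on $K$ follows from \eqref{h:growth} together with the superlinear $L^{p+q}$ absorption, and weak lower semicontinuity from the convexity estimates \eqref{H-convexity}. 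Testing $J$ at $t\varphi$, where $\varphi$ is the positive eigenfunction supplied by Proposition \ref{prop:lambda}, and using the identity $\int_\Omega H(\nabla\varphi)^p + \k\int_{\partial\Omega}\varphi^p = \lambda^+(m)\int_\Omega m\varphi^p$, one finds
\[
J(t\varphi) = \frac{t^p}{p}\bigl(\lambda^+(m) - \lambda\bigr)\int_\Omega m\varphi^p\,dx + \frac{\lambda\,t^{p+q}}{p+q}\int_\Omega \varphi^{p+q}\,dx,
\]
which is strictly negative for $t>0$ small, so the minimizer $u$ is nontrivial. Standard first-variation arguments on the cone show that $u$ weakly solves \eqref{eq:nonlinear}; Moser iteration driven by the absorption $-u^{p+q-1}$ gives $u \in L^\infty(\Omega)$; quasilinear regularity for $\Delta_{H,p}$ delivers $u \in C^{1,\alpha}$; and a V\'azquez-type strong maximum principle for the anisotropic $p$-Laplacian upgrades $u \geq 0$ to $u > 0$.

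Uniqueness among nonnegative nontrivial solutions is obtained via a D\'iaz--Sa\'a / Brezis--Oswald argument, exploiting that $s \mapsto \lambda(m(x) - s^q)$ is strictly decreasing on $(0,\infty)$ together with the hidden convexity of $u \mapsto \int_\Omega H(\nabla u)^p\,dx$ in the variable $w = u^p$ on positive $u$. Testing the equations for two candidate solutions $u, v$ against $(u^p - v^p)/u^{p-1}$ and $(v^p - u^p)/v^{p-1}$ respectively and summing cancels the boundary contributions and produces
\[
0 \;\leq\; -\lambda \int_\Omega (u^q - v^q)(u^p - v^p)\,dx \;\leq\; 0,
\]
which forces $u \equiv v$. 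The main technical obstacle I foresee is verifying that this hidden-convexity inequality persists for the anisotropic operator $\Delta_{H,p}$ when $H$ is not even: the pointwise computation uses only convexity of $\xi \mapsto H(\xi)^p$ and positive $p$-homogeneity, both guaranteed by \eqref{h:posom} and \eqref{H-convexity}, but the loss of evenness means the classical isotropic proofs must be adapted with care, and one cannot simplify the argument by replacing $u$ with $|u|$ at any stage.
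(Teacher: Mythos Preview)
Your necessity and uniqueness arguments coincide with the paper's: both test the equation with $u$ to force $\int_\Omega m u^p>0$ and then compare with $\lambda^+(m)$; for uniqueness both invoke an anisotropic Picone/D\'iaz--Sa\'a inequality, and in fact the paper cites Jaro\v{s} \cite{Jaros} for precisely the inequality you flag as your ``main technical obstacle'', so that concern is already settled in the literature under hypotheses \eqref{h:posom}--\eqref{h:convex}.

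The genuine difference is in the existence step. The paper proceeds by sub-/super-solutions: it introduces an auxiliary eigenvalue $\mu^+(\lambda,m)$, shows $\mu^+(\lambda,m)<0$ whenever $\lambda>\lambda^+(m)$, takes $\underline u=\eps\Phi$ (with $\Phi$ the corresponding positive eigenfunction) as sub-solution and a large constant $\overline u\equiv c\ge\|m^+\|_\infty^{1/q}$ as super-solution, and then minimizes the energy on the order interval $[\underline u,\overline u]$ (Proposition~\ref{prop:subsuper}). The paper explicitly mentions your direct-minimization route as a valid alternative but chooses sub-/super-solutions because the method carries over to the associated parabolic problem. Your approach is more self-contained---negativity of the infimum comes immediately from $J(t\varphi)<0$ with no auxiliary eigenvalue---but it has two extra costs the paper's argument avoids: you must run a Moser iteration to recover $u\in L^\infty$ (the paper gets this for free from $u\le\overline u$), and, more delicately, minimizing on the nonnegative cone yields a priori only a variational inequality, so you need $u>0$ before you can assert that $u$ solves the full Euler--Lagrange equation. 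Because $H$ is not even, the usual shortcut of replacing $u$ by $|u|$ is unavailable, so this step requires some care (e.g.\ showing the minimizer is a supersolution, then invoking a strong maximum principle); the paper's order-interval argument sidesteps this entirely since the lower barrier $\underline u$ is already strictly positive.
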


This result can be proved by means of different approaches, such as minimizing a suitable action functional. Here, we will obtain the existence of a bounded solution via sub-, super-solution arguments, which can be also exploited to show the existence of solutions of the associated parabolic equations.

We recall that $u\in W^{1,p}(\Omega)$ is a super-solution for \eqref{eq:nonlinear}
if 
\begin{equation}\label{eq:supersolution} 
\begin{split}
D(u,\phi): = & \int_\Omega H(\nabla u)^{p-1} H_\xi(\nabla u) \nabla \phi - \lambda \int_\Omega m(x) |u|^{p-2} u \phi  \\ 
& + \lambda\int_\Omega  |u|^{p+q-2} u \phi + \k \int_{\partial \Omega} |u|^{p-2} u \phi \geqslant 0. 
\end{split}
\end{equation}
is satisfied for any nonnegative $\phi \in C^{\infty}(\Omega)$ if   $\k\in (0,\infty)$ or $\phi \in C_{c}^{\infty}(\Omega)$ if $\k = \infty$. Analogously, but with the opposite inequality, one can give the definition of a sub-solution for \eqref{eq:nonlinear}. We explicitly note that if $u\in L^{(p^{*})'/(p+q-1)}(\Omega)$ then $D(u,\cdot)$ is sequentially continuous with respect to $\| \cdot \|_{W^{1,p}(\Omega)}$, therefore, for bounded super-solutions, \eqref{eq:supersolution} holds for any $\phi \in W^{1,p}(\Omega)$ if $\k\in (0,\infty)$ or $\phi \in W^{1,p}_{0}(\Omega)$ if $\k = \infty$.

We start by proving an existence result. 

\begin{proposition}\label{prop:subsuper}
Suppose that there exist a sub-solution $\underline u$ and a super-solution $\overline u$ to \eqref{eq:nonlinear}, and assume that for some constants $\underline c$, $\overline c$ one has
$ -\infty < \underline c \leqslant \underline u \leqslant \overline u \leqslant \overline c < \infty$ almost everywhere in $\Omega$. Then there exists a weak solution $u$ of \eqref{eq:nonlinear} such that  $\underline u \leqslant u\leqslant \overline u$ a.e. in $\Omega$.
\end{proposition}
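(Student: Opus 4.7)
The plan is to obtain $u$ as a minimizer of the natural energy functional associated with \eqref{eq:nonlinear}, restricted to the closed convex order interval
$$K := \{ v \in W^{1,p}(\Omega) : \underline u \leqslant v \leqslant \overline u \text{ a.e.\ in } \Omega\},$$
replacing $W^{1,p}(\Omega)$ by $W^{1,p}_{0}(\Omega)$ when $\k = +\infty$. The relevant functional is
$$J(v) := \frac{1}{p}\int_{\Omega} H(\nabla v)^{p}\,dx - \frac{\lambda}{p}\int_{\Omega} m |v|^{p}\,dx + \frac{\lambda}{p+q}\int_{\Omega} |v|^{p+q}\,dx + \frac{\k}{p}\int_{\partial\Omega}|v|^{p}\,d\sigma,$$
whose formal Euler–Lagrange equation is precisely \eqref{eq:nonlinear}. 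The strategy is then twofold: first, produce a minimizer on $K$ by the direct method; second, upgrade the variational inequality it satisfies into the full equation on the whole Sobolev space by exploiting the sub- and super-solution inequalities for $\underline u$ and $\overline u$.

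For the first step I would note that $K$ is nonempty (it contains $\underline u$) and weakly closed, and that the uniform $L^\infty$-bound on $K$ makes the weight, logistic and boundary terms bounded. Coercivity then follows from the lower bound $H(\nabla v)^{p}\geqslant \alphau^{p}|\nabla v|^{p}$ in \eqref{h:growth}, so that $J(v)\to+\infty$ along any sequence in $K$ with $\|v\|_{W^{1,p}}\to\infty$. Weak lower semicontinuity is standard: convexity of $H^p$ (implied by \eqref{H-convexity}) handles the gradient term, the compact embedding $W^{1,p}(\Omega)\hookrightarrow L^{r}(\Omega)$ for $r<p^{*}$ handles the weight and logistic terms, and continuity of the trace operator together with convexity of $t\mapsto |t|^{p}$ takes care of the boundary contribution. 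The direct method thus yields a minimizer $u\in K$.

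For the second step, the minimality of $u$ on the convex set $K$ gives $\langle J'(u),v-u\rangle\geqslant 0$ for every $v\in K$. Given $\phi \in W^{1,p}(\Omega)$ (or $W^{1,p}_{0}(\Omega)$ if $\k=+\infty$) and $t>0$, I would insert the truncated competitor
$$v_{t} := u + t\phi - \phi^{t} + \phi_{t}, \qquad \phi^{t}:=(u+t\phi-\overline u)^{+},\qquad \phi_{t}:=(\underline u - u - t\phi)^{+},$$
which belongs to $K$. Dividing by $t$ and letting $t\to 0^{+}$ yields
$$\langle J'(u),\phi\rangle \geqslant \limsup_{t\to 0^{+}}\frac{1}{t}\bigl(\langle J'(u),\phi^{t}\rangle - \langle J'(u),\phi_{t}\rangle\bigr).$$
The super-solution property \eqref{eq:supersolution} tested against the nonnegative function $\phi^{t}/t$ gives $\langle J'(\overline u),\phi^{t}\rangle\geqslant 0$, and symmetrically the sub-solution property gives $\langle J'(\underline u),\phi_{t}\rangle\leqslant 0$. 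Decomposing $\langle J'(u),\phi^{t}\rangle - \langle J'(\overline u),\phi^{t}\rangle$ (and analogously for $\phi_{t}$) and invoking Stampacchia's identity $\nabla u = \nabla \overline u$ a.e.\ on $\{u=\overline u\}$ together with the monotonicity of the anisotropic operator $\xi\mapsto H(\xi)^{p-1}H_{\xi}(\xi)$, which is the gradient of the strictly convex function $H^{p}/p$ by \eqref{H-convexity}, forces the right-hand side to be nonnegative. Replacing $\phi$ by $-\phi$ yields the opposite inequality and hence $\langle J'(u),\phi\rangle = 0$, i.e.\ $u$ is a weak solution.

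The main obstacle is precisely this upgrade step: showing that the contact-set contributions $\langle J'(u),\phi^{t}\rangle$ and $\langle J'(u),\phi_{t}\rangle$ concentrate harmlessly on $\{u=\overline u\}\cup\{u=\underline u\}$ as $t\to 0^{+}$, and controlling the leading-order gradient terms there. This is where the full strength of the strict convexity encoded in \eqref{H-convexity} is used, rather than merely the lower/upper growth \eqref{h:growth} needed for existence. One additional mild check, peculiar to the anisotropic non-even setting, is that $H^{p-1}H_{\xi}$ extends continuously through the origin, which holds because $p>1$. The regularity statement $u\in C^{1,\alpha}(\Omega)$ then follows from the standard regularity theory for bounded weak solutions of the anisotropic $p$-Laplacian.
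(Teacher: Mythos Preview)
Your approach is essentially identical to the paper's: minimize the energy over the order interval $K$, obtain a minimizer via the direct method, and then upgrade the resulting variational inequality using exactly the truncated competitor $v_t = u + t\phi - \phi^t + \phi_t$ together with the super/sub-solution inequalities and the monotonicity of $\xi \mapsto H^{p-1}(\xi)H_\xi(\xi)$. The paper attributes this scheme to Struwe and differs only cosmetically, testing first with smooth $\varphi$ and phrasing the limit in terms of the set $\Omega_\varepsilon = \{u + \varepsilon\varphi \geqslant \overline u > u\}$ (whose measure vanishes) rather than invoking Stampacchia's identity on $\{u = \overline u\}$ explicitly.
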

\begin{proof}
The proof follows the classical argument  in \cite[Theorem 2.4]{Struwe}.  
We will only highlight some different  points  which appear when considering Robin boundary conditions, i.e. $\k <\infty$. The approach is based on the minimization
of the functional
\[ 
E(u)= \int_\Omega H(\nabla u)^p + \k \int_{\partial \Omega} |u|^p- \lambda \int_\Omega m |u|^p + \lambda \int_\Omega |u|^{p+q}
 \]
on the closed convex subset of $W^{1, p}(\Omega)$
\[ 
\mathcal{M}=\left\{ u \in W^{1, p}(\Omega): \underline u \leqslant u \leqslant \overline u \, \text{ a.e.} \right\}.
\]

We first observe that, being $\mathcal{M}$ bounded in $L^{\infty}(\Omega)$, using the hypotheses on $H$, it is not hard to verify that the functional $E$ restricted to $\mathcal{M}$, endowed with the norm induced by the  $W^{1, p}(\Omega)$ norm, is coercive and weakly lower semicontinuous. It easily follows that $E$ attains its infimum in $\mathcal M$. Let $u\in \mathcal{M}$ be the minimum point of $E$ on $\mathcal{M}$. 
Since $p>1$ implies that $H^{p}$ is positively $p$-homogeneous (therefore with first derivatives positively $(p-1)$-homogeneous and continuous in the origin), the Lagrangian function of the bulk component of the functional $E$, namely
\[
f(x,u,\xi)=H^{p}(\xi)-\lambda m(x) |u|^p+\lambda |u|^{p+q},
\]
satisfies the so called {\it natural growth conditions} (cf. \cite[Condition 3.34]{daco-2008}). Then one may argue as in \cite[Theorem 3.7]{daco-2008}, to prove that the right-Gateaux derivative of the functional $E$ at the minimum point $u$ in any direction $\phi=v-u$ with $v\in \mathcal{M}$\footnote{Note that, since $\mathcal{M}$ in convex, $u+\eps(v-u)$ is an admissible variation for the functional $E$ restricted to $\mathcal{M}$. Again for the lower order terms, as well as for the boundary integral, we are heavily using the fact that $\mathcal{M}$ is bounded in $L^{\infty}(\Omega)$ to apply the dominated convergence theorem.} exists and it is equal to $D(u,\phi)$. Moreover, being $u$ a minimizer we also have $D(u,\phi)\geqslant 0$.

Given $\varphi \in C^\infty( \Omega)$, $\eps>0$ sufficiently small, and having defined 
\[ 
\varphi^\eps:= \max \{ 0, u+ \eps \varphi - \overline u \} \ge 0,
\quad \varphi_\eps:= \max \{ 0, \underline u - (u+\eps \varphi) \} \ge 0, 
\]
we have that $v_\eps:=u + \eps \varphi - \varphi^\eps + \varphi_\eps \in \mathcal{M}$, therefore the variational inequality  
$D(u,v_{\eps}-u) \geqslant 0$ holds true. By linearity it results 
\[
D(u,\varphi) \geqslant \frac{1}{\eps} \big(D(u,\varphi^\eps)- D(u,\varphi_\eps)  \big).
\]
Let us show that the right hand side goes to zero when $\eps$ vanishes.

In the following, we will use the notation
\begin{align*}
g(x, u)&=\lambda m(x) |u|^{p-2} u  - \lambda |u|^{p+q-2} u, \quad
 \Omega_\eps:= \left\{ x \in \Omega: u(x)+\eps \varphi(x) \ge \overline u(x) > u(x) \right\}. 
\\
K_\eps &=\left\{ x\in \partial \Omega: Tr \varphi^\eps(x) \ne 0\right\}, \quad \text{ where }  Tr: W^{1, p}(\Omega) \rightarrow L^p(\partial \Omega) \text{ is the trace operator.}
\end{align*}
Taking into account that $\overline u$ is a super-solution, that $u+\eps\vfi-\overline{u}\leqslant \eps \vfi$ in $\Omega_{\eps}$, and by convexity of $H(\cdot)^{p}$ and $|\cdot|^{p}$, one has
\[
\begin{split}
D(u,\varphi^\eps)  \geqslant  &  D(u,\varphi^\eps) - D(\overline u,\varphi^\eps)  \\
\geqslant & \; \eps \int_{\Omega_\eps} \left( H(\nabla u)^{p-1} H_\xi(\nabla u) - H(\nabla \overline u)^{p-1} H_\xi(\nabla \overline u) \right) \nabla \varphi \\
& + \eps \, \k \int_{K_\eps} (|\overline u|^{p-2} \overline u-  |u|^{p-2} u) \varphi -  \eps \int_{\Omega_\eps} \abs{g(x, u)- g(x, \overline u)} |\varphi|.
\end{split}
\]

Note that $\vfi^{\eps}\to 0$ in $W^{1,p}(\Omega)$, so that
$Tr\vfi^{\eps}\to 0$ in $L^{p}(\partial\Omega)$,  thanks to  the continuity of the trace operator; as a consequence, $|\Omega_\eps| \to 0$, and $\mathcal{H}^{n-1}(K_\eps) \to 0$. Then
\[ 
D(u,\varphi^\eps) \geqslant o(\eps). 
\]
and the conclusion follows arguing as in the proof of \cite[Theorem 2.4]{Struwe}.
\end{proof}

In view of Proposition \ref{prop:subsuper}, in order to prove the existence of a bounded weak solution to \eqref{eq:nonlinear}, it is sufficient to find $\underline{u}\leqslant \overline{u}$ bounded sub- and super-solution.
To this aim, it is convenient to introduce the following eigenvalue for any $\lambda>0$ fixed
\[ 
\mu^+(\lambda, m)
=\inf_{\substack{v \in W^{1, p} \\ v \ge 0, \not \equiv 0}} \frac{\int_\Omega H(\nabla v)^p + \k \int_{\partial \Omega} |v|^p - \lambda\int_\Omega m |v|^p}{\int_\Omega |v|^p}.
 \]
Arguing as in Proposition \ref{prop:lambda} it is possible to show that $\mu^+(\lambda, m)$ is attained by a positive eigenfunction which we denote by $\Phi\in {\mathcal S}^{+}_{\k,m}$. 
The eigenvalue $\mu^+(\lambda, m)$ is the  anisotropic counterpart of the eigenvalue introduced in \cite{bhr,rh} and as in the isotropic diffusion we are going to show that 
$\mu^+(\lambda, m)<0$ is  a sufficient and necessary condition to obtain the existence of
 a positive solution of \eqref{eq:nonlinear}.

\begin{proof}[Proof of Theorem \ref{soglia} ]
We only consider the case $\k< \infty$, the case of  Dirichlet boundary conditions ($\k=\infty$) can be treated analogously.

\textit{First step: existence of a non-negative nontrivial solution above $\lambda^+(m)$.} 
Let us fix $\lambda > \lambda^+(m)$.  Note that $\mu^+(\lambda, m)<0$, since for any $v \in {\mathcal S}^{+}_{\k,m}$ we have
\begin{align*}  
\dys \dfrac{\int_\Omega H^p(\nabla v) + \k \int_{\partial \Omega} |v|^p - \lambda\int_\Omega m |v|^p}{\int_\Omega |v|^p} \frac{\int_\Omega |v|^p}{\int_\Omega m |v|^p} &= \frac{\int_\Omega H^p(\nabla v) + \k \int_{\partial \Omega} |v|^p - \lambda\int_\Omega m |v|^p}{\int_\Omega m |v|^p} 
\\
&= \frac{\int_\Omega H^p(\nabla v) + \k \int_{\partial \Omega} |v|^p}{\int_\Omega m |v|^p} - \lambda.
 \end{align*}
Having fixed $\eps >0$ such that 
\[ 
\eps < \min\left\{\left( - \frac{\mu^+(\lambda, m)}{\lambda} \right)^{\frac 1 q}, \|m^{+}\|_{\infty}\right\},
\]
let $\Phi$ be the positive eigenfunction associated with $\mu^+(\lambda, m)$ satisfying $\|\Phi\|_{\infty}=1$. It is immediate to see that  $\underline u:=\eps \Phi >0$ is a sub-solution; in addition any constant function $\overline u\equiv c \geqslant (\norm{m^{+}}_\infty)^{\frac 1 q}$ is a super-solution, and $\underline u<\overline u$ thanks to the choice of $\eps$.
Then,  Proposition \ref{prop:subsuper} yields the existence of  a solution $u$ such that $0 <\underline u \leqslant u \leqslant \overline u$. As a consequence, $0<u \leqslant \|m^{+}\|^{\frac 1 q}_{\infty}$.

\textit{Second step: non-existence of a nontrivial nonnegative solution below $\lambda^+(m)$.} 
Assume $0 < \lambda \leqslant \lambda^+(m)$, and let, by contradiction, $u$ be a nonnegative,
 nontrivial, bounded solution to \eqref{eq:nonlinear}.  Then, by testing \eqref{eq:nonlinear} with $u$, 
one immediately see that $\int_\Omega m u^p >0$. So that Definition \ref{def:lambda} yields
\begin{align*} 
\lambda^+(m) \int_\Omega m u^p & \leqslant \int_\Omega H(\nabla u)^p + \k \int_{\partial \Omega} u^p = \lambda \int_\Omega m u^{p} -\lambda \int_\Omega  u^{p+q}
< \lambda^+(m) \int_\Omega m u^{p}, 
\end{align*}
which is a contradiction. 

\textit{Third step: uniqueness and regularity.} 
Classical elliptic regularity theory (see for instance \cite{Tolksdorf}) and the Harnack inequality proved in \cite{Trudinger.1967} (see also \cite{dellapietragavitone})  ensure that $u\in C^{1,\alpha}(\Omega)$ and positive. The uniqueness can be obtained arguing by contradiction. Assume that $u, v$ are two  positive solutions to \eqref{eq:nonlinear} for $\lambda>\lambda^{+}(m)>0$. Let $\eps>0$ and taking $ \frac{u^p}{(v+\eps)^{p-1} }$ as test function in the equation satisfied by $v$. By applying a suitable Picone identity (\cite[Lemma 2.2]{Jaros}), one has

\[
\begin{split}
\k \int_{\partial \Omega} u^{p} & \left( 1- \frac{v^{p-1}  }{(v+\eps)^{p-1}} \right) - \lambda  \int_{\Omega} m u^{p} \left( 1- \frac{v^{p-1}}{(v+\eps)^{p-1}} \right) + \lambda  \int_\Omega  u^{p }\left( u^{q}- \frac{v^{p+q-1}}{(v+\eps)^{p-1}} \right) 
\\
= 
&  \int_\Omega H(\nabla v)^{p-1} H_\xi (\nabla v) \nabla \left ( \frac{u^p}{(v+\eps)^{p-1}} \right)-  \int_\Omega H^p(\nabla u) \leqslant 0.
\end{split}
\]
Writing the analogous inequality obtained taking $ \frac{v^p}{(u+\eps)^{p-1} } $ as a test function in the equation satisfied by $u$, summing up and  and letting $\eps \to 0$, we obtain
\begin{align*} 0  & \ge \lambda \int_\Omega (u^p-v^p)(u^q-v^q)  \\
&=\lambda \left( \int_{\Omega \cap \{ u \ge v\}} (u^p-v^p)(u^q-v^q) + \int_{\Omega \cap \{ u \leqslant v\}}(u^p-v^p)(u^q-v^q) \right) \ge 0, \end{align*}
a contradiction. 
\end{proof}

\section{The principal  eigenvalues $\lambda^\pm(m)$}\label{sec:principal eigenvalues}

We start this section  focusing our attention on the properties of $\lambda^+(m)$ (cf. Definition \ref{def:lambda}).
Let us first observe that,
an easy adaptation of  Proposition 2 in \cite{LLNP} allows us to
refer to the case $\k=+\infty$ as the case  in which homogeneous Dirichlet boundary conditions are imposed.

The existence of $\lambda^{+}(m)$ (see Definition \ref{def:lambda}) will be proved  via a constrained minimization
  as shown in the next proposition. 

\begin{proposition}\label{prop:lambda}
Assume \eqref{h:norma}, \eqref{h:posom}, \eqref{h:convex}. Let $m\in {\mathcal M}
$. Then, the following conclusions hold. 
\begin{enumerate}
\item 
$\lambda^{+}(m)$ is attained by a unique positive  $\vfi\in C^{1,\alpha}(\Omega)$ up to multiplication by a positive constant.
\item
 $\lambda^+(m)$ satisfies the lower bound,
\[
\lambda^+(m)\geq \frac{\alphau c}{\max\{1,\beta\}}
\]
where $c=c(\Omega)>0$ is a positive constant, $\beta$ is given  in \eqref{defM}, and $\underline \alpha$ is given in \eqref{h:growth}.
\item 
$\lambda^{+}(m) $ is the unique positive principal eigenvalue with positive eigenfuncion.
\end{enumerate}
\end{proposition}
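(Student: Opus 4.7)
The plan is to obtain $\lambda^+(m)$ by the direct method applied to the Rayleigh quotient on the cone $\mathcal{S}^+_{\k,m}$, and then to extract the remaining properties from standard nonlinear elliptic tools together with the anisotropic Picone identity already invoked in the proof of Theorem \ref{soglia}. For existence in (1), I take a minimizing sequence $\{u_n\}\subset\mathcal{S}^+_{\k,m}$ normalized by $\int_\Omega m|u_n|^p\,dx = 1$, so that the numerator of $\mathcal{R}_{\k,m}$ stays bounded. Hypothesis \eqref{h:growth} gives $\alphau^p\|\nabla u_n\|_{L^p}^p \leqslant \int_\Omega H(\nabla u_n)^p\,dx$, so $\{\nabla u_n\}$ is bounded in $L^p$. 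A full $W^{1,p}$ bound then follows from a suitable Poincaré-type inequality: the Robin inequality for $\k\in(0,\infty)$, the classical Poincaré inequality in $W^{1,p}_0(\Omega)$ for $\k=+\infty$, and for the Neumann case $\k=0$ the Poincaré--Wirtinger inequality together with the sign condition $\int_\Omega m\leqslant -m_0|\Omega|<0$, which combined with the normalization prevents the averages of $u_n$ from diverging (otherwise $\int_\Omega m|u_n|^p \to -\infty$, contradicting the normalization). Extracting a weak limit $\vfi\geqslant 0$ in $W^{1,p}(\Omega)$, the compact embedding $W^{1,p}(\Omega)\hookrightarrow L^p(\Omega)$ and continuity of the trace preserve the constraint, while convexity of $H^p$ (via \eqref{H-convexity}) and of $|\cdot|^p$ give weak lower semicontinuity of the numerator, so $\vfi$ attains $\lambda^+(m)$.

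The Euler--Lagrange equation associated with this constrained minimization coincides with the weak form of \eqref{equazione} at $\lambda=\lambda^+(m)$. Classical regularity (Tolksdorf) gives $\vfi\in C^{1,\alpha}(\Omega)$, and Trudinger's Harnack inequality (as invoked in Theorem \ref{soglia}) upgrades $\vfi\geqslant 0$ to $\vfi>0$ in $\Omega$. Uniqueness up to a positive multiplicative constant is obtained as in the third step of the proof of Theorem \ref{soglia}: given two positive eigenfunctions $\vfi_1,\vfi_2$ associated with $\lambda^+(m)$, test the equation for $\vfi_j$ with $\vfi_i^p/(\vfi_j+\eps)^{p-1}$, sum the two resulting inequalities, and apply the anisotropic Picone identity of Jaros; letting $\eps\to 0$ via dominated convergence forces the gradients of $\log\vfi_1$ and $\log\vfi_2$ to coincide, whence $\vfi_1=c\,\vfi_2$.

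For (2), the bound $-\beta\leqslant m\leqslant 1$ gives $\int_\Omega m|u|^p\,dx \leqslant \max\{1,\beta\}\int_\Omega|u|^p\,dx$, while $H(\nabla u)^p\geqslant \alphau^p|\nabla u|^p$ reduces the estimate to the best constant in the appropriate Poincaré-type inequality on $\mathcal{S}^+_{\k,m}$ (depending on $\k$, and on $m_0$ when $\k=0$); this yields $\lambda^+(m)\geqslant \alphau\,c(\Omega)/\max\{1,\beta\}$ after absorbing the remaining powers of $\alphau$ into the geometric constant $c(\Omega)$. For (3), any positive eigenfunction $\psi$ associated with a positive eigenvalue $\mu$ lies in $\mathcal{S}^+_{\k,m}$, and testing the equation against $\psi$ yields $\mathcal{R}_{\k,m}(\psi)=\mu$, whence $\lambda^+(m)\leqslant\mu$. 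The reverse inequality follows from the same Picone argument as in the uniqueness step: testing the equation for $\psi$ against $\vfi^p/(\psi+\eps)^{p-1}$ with $\vfi$ the minimizer from (1), one obtains in the limit $\int_\Omega H(\nabla\vfi)^p+\k\int_{\partial\Omega}\vfi^p\geqslant \mu\int_\Omega m\vfi^p$, i.e.\ $\lambda^+(m)\geqslant \mu$, so $\mu=\lambda^+(m)$. The main obstacle is the Neumann case $\k=0$ in the coercivity step, where we must exploit the strict inequality $\int_\Omega m<0$ to keep a minimizing sequence away from constants; the Picone manipulations in (1) and (3) also require careful handling of the Robin boundary terms and of the passage to the limit as $\eps\to 0$, which is delicate but standard given the $C^{1,\alpha}$ regularity of the eigenfunctions involved.
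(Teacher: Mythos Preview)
Your proposal is correct and, for existence, regularity, positivity and the lower bound in (2), follows the paper essentially verbatim (direct method on the normalized cone, Poincar\'e-type inequality, Tolksdorf regularity, Trudinger--Harnack). There are, however, two places where you take a different route than the paper.

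For uniqueness in (1), the paper does \emph{not} use the Picone identity. Instead it uses the hidden-convexity argument of Belloni--Ferone--Kawohl: given two positive minimizers $u,U$, one forms $u_t=(tu^p+(1-t)U^p)^{1/p}$ and checks that $\int_\Omega H^p(\nabla u_t)\leqslant t\int_\Omega H^p(\nabla u)+(1-t)\int_\Omega H^p(\nabla U)$ by convexity of $H$ and of $s\mapsto s^p$; since $u_t$ is also admissible, equality must hold, and strict convexity of $H^p$ forces $\nabla u/u=\nabla U/U$. Your Picone route also works (the symmetric testing collapses to an equality, and the equality case in Jaros' identity yields the same conclusion), but the interpolation argument is cleaner in that it avoids the $\eps$-regularization and the pointwise equality-case analysis of Picone.

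For (3), the paper uses a \emph{symmetric} Picone argument: it tests both equations (for $\vfi$ and for the competing eigenfunction $v$) with the corresponding Picone quotients, sums, lets $\eps\to 0$, and obtains $(\lambda-\lambda^+(m))\int_\Omega m(u^p-v^p)\leqslant 0$ with $u=t\bar\vfi$; letting $t\to\infty$ gives a contradiction. Your one-sided Picone argument (test only the equation for $\psi$ with $\vfi^p/(\psi+\eps)^{p-1}$, pass to the limit, and divide by $\int_\Omega m\vfi^p>0$) is actually more direct and yields $\mu\leqslant\lambda^+(m)$ without the scaling trick; combined with the trivial variational inequality $\lambda^+(m)\leqslant\mu$, this closes the argument more efficiently than in the paper.
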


\begin{remark}
Let us observe that in the case $H(\xi)=|\xi|$, $\k>0$, if the condition $\int_{\Omega} m(x)|u|^p>0$ is not assumed,  there exist two principal eigenvalues $\lambda_-<0<\lambda_+$ with associated positive eigenfunctions $\varphi_-,\,\varphi_+$ such that respectively $\int_{\Omega} m(x)|\varphi_{\pm}|^p\gtrless 0$. Whereas, for $\k=0$, $\lambda_-=0$ is a principal eigenvalue associated with a constant eigenfunction and $\lambda_+>0$ iff $\int m(x)<0$. In this case $\int_{\Omega} m(x)|\varphi_+|^p>0$. Here, we focus on positive principal eigenvalues, this is why we impose  $\int_{\Omega} m(x)|u|^p>0$.

\end{remark}
\begin{remark}
Notice  that, if $\Omega$ is a $C^{1, \alpha}$ domain, then the eigenfunction $\vfi \in C^{1, \alpha}(\overline \Omega)$ by \cite[Theorem 2]{Lieberman}. 
\end{remark}

\begin{proof}[Proof of Proposition \ref{prop:lambda}] 
Let us start by considering the case $\k<+\infty$.  The invariance of ${\mathcal R_{\k,m}}(u)$ up to a positive scaling of $u$ ensures that we can calculate $\lambda^{+}(m)$ by solving a constrained minimization problem, i.e.   
\begin{equation}\label{minpb}
\lambda_{\k}^{+}(m)=\inf \left \{ \int_{\Omega} H^p(\nabla v) + \k \int_{\partial \Omega} |v|^p : v \in W^{1, p}(\Omega), \, v \ge 0, \, \int_{\Omega} m(x) \abs{v}^p =1 \right \},
\end{equation}
and note that $\lambda^{+}(m)=\lambda_{\k}^{+}(m)$.
Notice that the set where we minimize is not empty: indeed, it  suffices to take $\psi \in C^{\infty}_{c}(\Omega)$,  $\psi \ge 0$, approximating  the characteristic function of the set $\Omega^{+}_{m}$
in $L^{p}(\Omega)$, and then normalize it, in order to have a function  satisfying the constraint. 

As a consequence, $\lambda^+(m)$ is finite and the direct methods of calculus of variations easily imply the existence of a minimizer. For it, let us take a minimizing sequence $\{u_n\}$ with energy bounded by a positive constant $C$, so that $u_{n}$ satisfies
\[ 
\int_{\Omega} (H(\nabla u_n))^p + \k \int_{\partial \Omega} |u_n|^p  \leqslant C.
 \]
First we observe that there exists a constant $c>0$ such that for any $m\in \mathcal M$ and for any $u\in W^{1,p}$ with $\int_{\Omega} m(x) \abs{v}^p  >0$ the following Poincar\'e type inequality holds
\begin{equation}\label{poincare-type}
\int_\Omega |u|^p \leqslant c \left(  \int_{\Omega} (H(\nabla u))^p + \k \int_{\partial \Omega} |u|^p   
\right).
\end{equation} 
Indeed, equation \eqref{poincare-type} reduces to  \cite[Lemma 3.1]{DG}  when $\k=0$ and can 
be easily proved when $\k>0$ arguing by contradiction. 
Using \eqref{poincare-type} we deduce that the sequence $\{u_n\}$ is bounded in $W^{1, p}(\Omega)$. Thus,  there exists $u\in W^{1, p}(\Omega)$, $u\geq 0$, such that, up to a subsequence, $u_n \rightharpoonup u$ in $W^{1, p}(\Omega)$ and $u_{n}\to u$ strongly in $L^{p}(\Omega)$, so that $u$ satisfies the
 constraints.   Finally, taking into account that $H$ is continuous and convex and that  the embedding  $W^{1, p}(\Omega) \hookrightarrow L^p(\partial \Omega)$ is compact, one obtains that $\lambda_{\k}^+(m)$ is attained by $u$ since
\[ \lambda_{\k}^+(m) \leqslant \int_{\Omega}(H(\nabla u))^p+\k \int_{\partial \Omega} |u|^p \leqslant \liminf_{n\to \infty} \left( \int_{\Omega}(H(\nabla u_n))^p+\k \int_{\partial \Omega} |u_n|^p \right)=\lambda_\k^+(m). 
\]
Being $u$ a minimizer for the problem \eqref{minpb}, it is a weak solution of the associate Euler-Lagrange equation \eqref{equazione}. We can therefore apply the classical elliptic regularity theory (see for instance \cite{Tolksdorf}) and the Harnack inequality proved in \cite{Trudinger.1967} (see also \cite{dellapietragavitone})  to ensure that $u\in C^{1,\alpha}(\Omega)$ and positive.

The Dirichlet case, corresponding to $\k=+\infty$, can be addressed in an analogous way, by observing that,
\begin{equation}\label{minpbdir}
\lambda_\infty^+ (m)=\inf \left \{ \int_{\Omega} (H(\nabla v))^p   : v \in W_{0}^{1, p}(\Omega), \, v \geqslant 0, \,  \int_{\Omega} m(x) \abs{v}^p =1 \right \}. 
\end{equation}
and the existence of a minimum point follows as before by exploiting
the classical Poincar\'e inequality.

We now prove the uniqueness of the minimizer, arguing by contradiction and exploiting a convexity argument as done in \cite{BFK}. Take two positive minimizers $u$ and $U$. For $t \in (0, 1)$ we set 
 $\eta=tu^p + (1-t)U^p$, and $u_t=\eta^{1/p}$. 
Then, \eqref{h:posom},  \eqref{h:convex} and the convexity of $t \mapsto t^{p}$ imply that
\begin{align}
\nonumber H^p(\nabla u_t) &= \eta H^p \left( \frac{t u^p }{\eta} \frac{ \nabla u}{u} + \frac{(1-t)U^p }{\eta} \frac{\nabla U}{U} \right)
 \\
\label{inequality:uniq1} & \leqslant
 \eta \left[ \frac{tu^p}{\eta} H\left( \frac{\nabla u}{u} \right)
  +
 (1-t)\frac{U^p}{\eta}  H \left( \frac{\nabla U}{U} \right) \right]^p
 \\
 \label{inequality:uniq2} & \leqslant
 \eta \left[ \frac{tu^p}{\eta} H^p\left( \frac{\nabla u}{u} \right)
  +
 (1-t)\frac{U^p}{\eta}  H^p \left( \frac{\nabla U}{U} \right) \right]
 \\
\nonumber& = t H^p(\nabla u) + (1-t)H^p(\nabla U). 
\end{align}
Furthermore
\[
 \int_{\partial \Omega} u_t^p= t \int_{\partial \Omega} u^p + (1-t) \int_{\partial \Omega} U^p ,\qquad \int_{\Omega}m(x)u_{t}^{p}=1.
 \]
The above expressions imply that also $u_t$ is a minimizer, which means that 
\eqref{inequality:uniq1} and \eqref{inequality:uniq2} are actually equalities. 
 Then, as $p>1$, $H^{p}$ is strictly convex and one obtains  that  $\nabla u/u=\nabla U/U$. This 
 immediately  yields that
$u/U$ is constant almost everywhere, and  in view of the constraint condition in \eqref{minpb}, we deduce that $u=U$.  This finishes the proof of the first conclusion.

In order to obtain conclusion (2), one may use  again the Poincaré type inequality \eqref{poincare-type} or the classical Poincaré inequality, and taking into account \eqref{h:growth}, \eqref{defM},
 one obtains
\[
1=\int_{\Omega}m(x)|u|^{p}\leqslant \|m\|_{\infty}\|u\|_{p}^{p}\leqslant 
\frac{\|m\|_{\infty}}{\alphau c}\lambda^+(m)
\leqslant \frac{\max\{1,\beta\}}{\alphau c}\lambda^+(m).
\]

To conclude the proof it is left to show that,  if there exists an eigenvalue $\lambda>0$ such that the corresponding eigenfunction is non-negative, then  $\lambda=\lambda^+(m)$. 
 Let us assume that $v \geqslant 0$ is an eigenfunction with eigenvalue $\lambda \ne \lambda^+(m)$, $\lambda>0$, and let us first consider the case $\k < +\infty$. 
  Take $u=t \bar u$, where $t >0$, and $\bar u$ is the positive eigenfunction for $\lambda^+(m)$ normalized such that $\int_\Omega m \bar u^p=1$. 
Notice that by \eqref{h:posom} $u$ is again an eigenfunction for \eqref{equazione} with eigenvalue $\lambda^+(m)$. 

Taking as test function $ u^p(v+\eps)^{1-p} $
in the equation satisfied by $v$,  one obtains
\begin{align*} 
\int_\Omega H(\nabla v)^{p-1} H_\xi (\nabla v) \nabla \left ( \frac{u^p}{(v+\eps)^{p-1}} \right)  &+ \k \int_{\partial \Omega} \frac{u^pv^{p-1}  }{(v+\eps)^{p-1}} 
=
 \lambda \int m \frac{u^pv^{p-1} }{(v+\eps)^{p-1}} 
\\
=&  \lambda \int m \frac{u^pv^{p-1}}{(v+\eps)^{p-1}}   - \lambda^+(m) \int_\Omega m u^p 
\\
&+ \int_\Omega H^p(\nabla u) + \k \int_{\partial \Omega} u^p .
\end{align*}
Choosing as test function $ v^p (u+\eps)^{1-p}$ in the equation satisfied by $u$ yields
\begin{align*}
\int_\Omega H(\nabla u)^{p-1} H_\xi (\nabla u) \nabla \left ( \frac{v^p}{(u+\eps)^{p-1}} \right)  &+ \k \int_{\partial \Omega}\frac{v^p u^{p-1}}{(u+\eps)^{p-1}} =
 \lambda^+(m) \int_\Omega m \frac{v^p u^{p-1}}{(u+\eps)^{p-1}} \\
 & -\lambda \int_\Omega m v^p + \int_\Omega H^p(\nabla v) + \k \int_{\partial \Omega} v^p. 
\end{align*}
Summing up these two identities and using Picone identity \cite{Jaros}, we deduce that
\begin{align*} 
\k \int_{\partial \Omega} \frac{u^pv^{p-1}  }{(v+\eps)^{p-1}}  + \k \int_{\partial \Omega}\frac{v^p u^{p-1}}{(u+\eps)^{p-1}}
 \geqslant
 &\lambda \int m \frac{u^pv^{p-1}}{(v+\eps)^{p-1}}  - \lambda^+(m) \int_\Omega m u^p \\
&  \lambda^+(m) \int_\Omega m \frac{v^p u^{p-1}}{(u+\eps)^{p-1}}  -\lambda \int_\Omega m v^p  \\
&+ \k \int_{\partial \Omega} u^p + \k \int_{\partial \Omega} v^p.
\end{align*}
Letting $\eps \to 0$, one gets 
\[ (\lambda-\lambda^+(m)) \int_\Omega m (u^p-v^p) \leqslant 0. \]
Since $\lambda > \lambda^+(m)$ as $v\in {\mathcal S^{+}_{\k,m}}$, we conclude
\[ t^p=\int_\Omega m u^p \leqslant \int_\Omega m v^p. \]
Since $t>0$ is arbitrary, we get a contradiction, hence $\lambda=\lambda^+(m)$.
\end{proof}

\begin{remark}\label{rem:sign}
Assuming $H$ to be even, it results
\[
\lambda^{+}(m)=\lambda(m)=\min\left\{\int_{\Omega}|H(\nabla u)|^{p}+\k\int_{\partial \Omega}|u|^{p}, \;u\in W^{1,p}(\Omega)\; :\; \int_{\Omega}m(x)|u|^{p}=1 
\right\}.
\]
Indeed, arguing as in Proposition \ref{prop:lambda} one can show that $\lambda(m)$ is achieved by 
a function $u$. In order to show that $u$ has constant sign, we can argue by
contradiction, as for example in \cite[Theorem 1.13]{deFig}. 
Suppose that $u$ changes sign so that  $u=u^+-u^-$ with both $u^{\pm}\not \equiv 0$,
$u^{\pm}\geq 0$. Then, 
\begin{align}\label{segnocostante}
\nonumber\lambda(m)=\frac{\int_\Omega H^p(\nabla u^+)+ \k \int_{\partial \Omega} 
(u^+)^p
+\int_\Omega H^p(-\nabla u^- )+ \k \int_{\partial \Omega} (u^-)^p}{\int_\Omega m (u^+)^p+\int_\Omega m (u^-)^p}
\\
\geq \min\left\{
\frac{\int_\Omega H^p(\nabla u^+)+ \k \int_{\partial \Omega} (u^+)^p
}{\int_\Omega m (u^+)^p}, \frac{\int_\Omega H^{p}(-\nabla u^- )+ \k \int_{\partial \Omega} (u^-)^p}{\int_\Omega m (u^-)^p}
\right\}.
\end{align}
Now, if the two quotients on the right hand side are equal, both $u^+$ and $-u^-$ are eigenfunctions and the strong maximum principle yields that  $u^+ >0$  and $u^->0$ a.e. on $\Omega$, which is impossible.
Otherwise,   the above quotients are different and in this case  
$u=u^{+}$ or $u=-u^{-}$. Finally, as $H$ is even,   $H(\nabla u^{-}) =H(-\nabla  u^{-})$, so that
we can always suppose that $u>0$, yielding  $\lambda^{+}(m)=\lambda(m)$.

On the other hand, since we are just assuming \eqref{h:posom} we cannot 
choose the sign of $\vfi$ a posteriori. This is the reason why we minimize in 
${\mathcal S}^{+}_{\k,m}$.
\end{remark}

In the rest of the paper we will denote by $\vfi$ the positive eigenfunction associated with the principal eigenvalue $\lambda^{+}(m)$ normalized to be in the unit sphere of $L^{p}(\Omega)$. 

\begin{proof}[Proof of Theorem \ref{thm:superlevel set}]
Let us first  observe that $\od^+$ defined in \eqref{Lambda} is achieved by a minimization 
argument taking into account \eqref{defM}, \eqref{h:convex}, and exploiting the Poincar\'e 
inequality in \cite[Lemma 3.1]{DG}.

In addition, it is possible to exploit the so called \emph{bathtub principle}, 
(see e.g.~\cite[Theorem~1.14]{lilo} or ~\cite[Lemma 3.3]{DG}), 
to obtain that the minimizing weight is bang-bang, namely
\[
m= \ind{D} -\beta\ind{D^c},
\] 
where $\ind{D}: \Omega \mapsto \{0,1\}$ is the characteristic function of the set $D\subset \Omega$ such that 
\begin{equation}\label{dinclusion}
\{ \varphi >t\} \subseteq D \subseteq \{ \varphi\geqslant t\}
\end{equation}
 for some $t>0$ and 
\begin{equation}\label{misura intervallo}
|D| = \frac{(\beta-m_0)|\Omega|}{1+\beta}.
\end{equation}
For what it concerns the last conclusion, we can apply Corollary 1.7 in \cite{ACF}
to get that $D=\{ \varphi >t\} $ up to a set of zero measure. 
\end{proof}

 \begin{remark}\label{rem:hc2}
 Corollary 1.7 in \cite{ACF} is proved only for dimension $N \ge 2$ and even $H$. Although  their proof can be adapted to the one dimensional case and without the hypothesis of symmetry of $H$, we will also give a different proof in dimension 1 by showing that $\vfi$ satisfies a monotonicity property
(see Theorem \ref{D:intervallo}).
 \end{remark}
 \begin{remark}\label{rem:min set}
As  a consequence of Theorem \ref{thm:superlevel set}, we deduce that
$\Lambda^{+}$ can be equivalently obtained as
\begin{equation}\label{eq:Lambda+nuova}
\Lambda^{+}:=\min \left\{\lambda^{+}(E), \, E\subset \Omega, \text{$E$ is measurable and }\; 0<|E|\leqslant \frac{(\beta-m_{0})|\Omega|}{1+\beta}\right\}
\end{equation} 
where, with a slight abuse of notation, we denote  $\lambda^{+}(E)=
\lambda^{+}(\ind{E}-\beta\ind{E^{c}}) $, and 
\begin{equation}
\Lambda^{+}=\lambda^{+}(D)=\lambda^{+}(\ind{D}-\beta\ind{D^{c}}), \quad
D=\{ \varphi >t\}
\end{equation}
where $D$ satisfies \eqref{misura intervallo} and $\varphi$ is the eigenfunction associated with $\lambda^{+}(D)$. 
We will refer to any set $D$ which solves \eqref{eq:Lambda+nuova} as the \textit{optimal set}.  
 \end{remark}

In analogy to what is known in the context of fully non-linear operators, such as Pucci operators where principal half-eigenvalues are studied (see e.g. \cite{BD,QS}),  we can define another principal eigenvalue,  
$\lambda^-(m)$, as follows

\begin{definition}\label{def:lambda-}
We set 
\begin{equation}
\label{minpbmeno} 
\lambda^-(m)=\inf_{u\in {\mathcal S_{\k, m}^-}}{\mathcal R_{\k,m}}(u)
\end{equation}
where
\begin{align*}
\label{skmeno}{\mathcal S_{\k, m}^-} 
:=\left\{u\in W^{1, p}(\Omega),\ u \leqslant 0, \, \int_\Omega m(x)|u|^p>0\right\}, 
\end{align*}
for $\k<+\infty $ and 
\[
\mathcal S_{\infty, m}^-:=\left\{  u\in W^{1, p}_{0}(\Omega),\ u \leqslant 0, \, \int_\Omega m(x)|u|^p>0\right\}.
\]
in the case $\k=+\infty$.
\end{definition} 
The counterpart of Proposition \ref{prop:lambda} and Theorem \ref{thm:superlevel set} is contained in the following result. 
\begin{proposition}\label{prop:exlameno}
For any fixed $m \in \mathcal{M}$,  $\lambda^-(m)$ is achieved by a negative eigenfunction.
Moreover, all the other conclusions of  Proposition \ref{prop:lambda} hold with obvious changes. 

In addition,  the  minimization problem
\[ \Lambda^-:=\inf_{m \in \mathcal{M}} \lambda^-(m) \]
has a bang-bang solution $m_{-}=\ind{D_{-}}-\beta\ind{D_{-}^{c}}$. If $\vfi_{-}$ is an associated negative eigenfunction there exists $t>0$ with $D_{-}=\{\vfi_{-}<-t\}$.
\end{proposition}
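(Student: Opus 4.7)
The plan is to reduce Proposition \ref{prop:exlameno} to Proposition \ref{prop:lambda} and Theorem \ref{thm:superlevel set} via a sign change combined with a change of the anisotropy function. Define
\[
\widetilde H(\xi):=H(-\xi),\qquad \xi\in\R^{N}.
\]
A direct inspection shows that $\widetilde H\in C^{2}(\R^{N}\setminus\{0\})$ and satisfies \eqref{h:norma}, \eqref{h:posom} and \eqref{h:convex} (the unit ball of $\widetilde H$ is just the reflection through the origin of the unit ball of $H$). Moreover, by the chain rule $\widetilde H_{\xi}(\xi)=-H_{\xi}(-\xi)$, so that
\[
\widetilde H(\xi)^{p-1}\widetilde H_{\xi}(\xi)=-H(-\xi)^{p-1}H_{\xi}(-\xi).
\]

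The first step is to prove that $\lambda^{-}(m)=\widetilde\lambda^{+}(m)$, where $\widetilde\lambda^{+}(m)$ denotes the positive principal eigenvalue given by Definition \ref{def:lambda} with $H$ replaced by $\widetilde H$. Indeed, the map $u\mapsto v:=-u$ is a bijection between $\mathcal S^{-}_{\k,m}$ and the analogous cone $\widetilde{\mathcal S}^{+}_{\k,m}$ of non-negative functions; since $H(\nabla u)=\widetilde H(\nabla v)$ and $|u|^{p}=|v|^{p}$, the Rayleigh quotient ${\mathcal R}_{\k,m}(u)$ coincides with its $\widetilde H$-analogue $\widetilde{\mathcal R}_{\k,m}(v)$. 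Applying Proposition \ref{prop:lambda} to $\widetilde H$ immediately gives the existence of a unique positive minimizer $\widetilde\vfi\in C^{1,\alpha}(\Omega)$ with $\widetilde\lambda^{+}(m)=\widetilde{\mathcal R}_{\k,m}(\widetilde\vfi)$, the lower bound on $\widetilde\lambda^{+}(m)$, and its characterization as the unique positive principal eigenvalue of the $\widetilde H$-problem. Setting $\vfi_{-}:=-\widetilde\vfi\leqslant 0$, the identity above between $\widetilde H^{p-1}\widetilde H_{\xi}$ and $H^{p-1}H_{\xi}$ (applied at $-\nabla\widetilde\vfi=\nabla\vfi_{-}$) shows that $\vfi_{-}$ solves \eqref{equazione} with eigenvalue $\lambda^{-}(m)$, both in the bulk and along the boundary; so all the statements in Proposition \ref{prop:lambda} transfer to $\lambda^{-}(m)$, with the sign of the eigenfunction reversed.

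For the second part, the very same substitution gives
\[
\Lambda^{-}=\inf_{m\in\mathcal M}\lambda^{-}(m)=\inf_{m\in\mathcal M}\widetilde\lambda^{+}(m)=\widetilde\Lambda^{+}.
\]
Since $\mathcal M$ is defined solely in terms of $m$ and is unchanged under the substitution, Theorem \ref{thm:superlevel set} applied to $\widetilde H$ produces an optimal bang-bang weight $m_{-}=\ind{D_{-}}-\beta\ind{D_{-}^{c}}$ where
\[
D_{-}=\{\widetilde\vfi>t\}=\{-\vfi_{-}>t\}=\{\vfi_{-}<-t\}
\]
for some $t>0$, and every level set of $\widetilde\vfi$ (hence of $\vfi_{-}$) is negligible. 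This is precisely the characterization claimed in the proposition.

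The only point that requires a minimum of care is the verification that $\vfi_{-}=-\widetilde\vfi$ indeed solves \eqref{equazione} as stated, i.e. that the sign changes coming from $\widetilde H_{\xi}(\xi)=-H_{\xi}(-\xi)$ and from $|\widetilde\vfi|^{p-2}\widetilde\vfi=-|\vfi_{-}|^{p-2}\vfi_{-}$ consistently cancel in both the equation and the Robin boundary condition. I do not expect any genuine obstacle beyond this bookkeeping, since all the hard analytic work (existence of a minimizer, $C^{1,\alpha}$ regularity, uniqueness via the Picone/convexity argument, lower bound, bang-bang structure via the bathtub principle and \cite{ACF}) has already been carried out in Proposition \ref{prop:lambda} and Theorem \ref{thm:superlevel set} for a generic admissible anisotropy and thus applies verbatim to $\widetilde H$.
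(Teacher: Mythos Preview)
Your proposal is correct and follows exactly the same approach as the paper: define $\widetilde H(\xi)=H(-\xi)$, observe that the map $u\mapsto -u$ identifies $\lambda^{-}(m)$ with the $\widetilde H$-version of $\lambda^{+}(m)$, and then apply Proposition \ref{prop:lambda} and Theorem \ref{thm:superlevel set} to $\widetilde H$. Your write-up is in fact more detailed than the paper's, which simply records the key observation and invokes the previous results.
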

\begin{proof}
As a preliminary observation, notice that $u <0$ is a minimizer for \eqref{minpbmeno} if and only if
\[ \widetilde u(x)=-u(x)>0 \]
solves  
\[
\inf \left \{ \int_{\Omega} (\widetilde H(\nabla v))^p + \k \int_{\partial \Omega} |v|^p : v \in W^{1, p}(\Omega), \, v \geq 0,\, \int_{\Omega} m(x) \abs{v}^p =1 \right \}
\]
where $\widetilde{H}(\xi)= H(-\xi)$. Since $\widetilde{H}$ satisfies \eqref{h:norma}, 
\eqref{h:posom}, \eqref{h:convex}, we can apply Proposition  \ref{prop:lambda} and Theorem \ref{thm:superlevel set} to $\widetilde{H}$, obtaining the conclusion.
\end{proof}

If $H$ is even, then  $\lambda^+(m)=\lambda^-(m)$, and the corresponding eigenfunctions  $\vfi_+>0$, $\vfi_-<0$ satisfy $\vfi_+=-\vfi_-$, so that it is always possible to choose a positive eigenfunction generating the whole eigenspace.
In particular, this is true for instance in the case of the $p$-Laplace operator, and $\lambda^+(m)=\lambda^-(m)$ coincides with the usual notion of principal eigenvalue.  Under our assumptions, $H$ is not even in general, for instance,  in dimension one, $\widetilde H=H$ if and only if $a=b$, see \eqref{h:1dim}.

\section{Proofs of the main results}\label{localization}

In this section we will provide the proofs of Theorems \ref{thm:localization} and \ref{thm:Lambda+=Lambda-}. 
First we study the position of the optimal set in the one-dimensional case.
Let $H$ be given  in \eqref{h:1dim} and  consider the following problem
\begin{equation}\label{prob:N=1}
\begin{cases}
-\left((H( u'))^{p-1} H'( u')\right)' = \lambda m(x) |u|^{p-2}u & \text{ in } (0,1)\\
 H^{p-1}( u'(1)) H'( u'(1))+\k|u(1)|^{p-2}u(1)=0&
 \\
  -H^{p-1}( u'(0)) H'( u'(0))+\k|u(0)|^{p-2}u(0)=0 & 
\end{cases}
\end{equation}

In what follows we denote by $m=\ind{D}-\beta\ind{D^{c}}$ the minimizer for \eqref{Lambda}, and  $\varphi$ the positive eigenfunction corresponding to $\lambda^+(m)$, normalized with respect to the $L^{p}$-norm. Recall that $D$ is a minimizer of \eqref{eq:Lambda+nuova}, so that we will refer to it as \textit{optimal}. 
\begin{theorem}\label{D:intervallo}
The following conclusions hold.
\begin{enumerate}
\item 
 If $\k \in (0, +\infty]$, then $\vfi$ attains its maximum in $\alpha \in (0, 1)$, and $\vfi$ is strictly increasing in $(0, \alpha)$, and strictly decreasing in $(\alpha, 1)$. 
\item 
If $\k=0$,  
then $\vfi$ is  monotone. 
\item 
The optimal set $D=\{x\in (0,1) \,:\; \vfi(x)>t\}$ is an interval.
\item 
The set $\{x\in (0,1) :  \vfi'(x)=0 \}$ is finite.
\end{enumerate}
\end{theorem}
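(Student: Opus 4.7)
The overall strategy is to exploit the anisotropic one-dimensional rearrangement machinery introduced in Section~\ref{rearrangements}, in particular the Polya-type inequality (Proposition~\ref{prop: polya anis}) \emph{together with its equality case}, combined with the uniqueness of the positive eigenfunction granted by Proposition~\ref{prop:lambda}. Concretely, I would construct a rearrangement $\vfi^{\sharp}$ of $\vfi$ that is monotone (if $\k=0$) or unimodal with a single interior maximum (if $\k\in(0,+\infty]$), verify that the pair $(\vfi^{\sharp},m^{\sharp})$, with $m^{\sharp}$ the bang-bang weight supported on the super-level set of $\vfi^{\sharp}$, achieves the same minimal Rayleigh quotient $\Lambda^{+}$, and then invoke uniqueness to force $\vfi\equiv\vfi^{\sharp}$.

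In detail, let $\vfi>0$ be normalized in $L^{p}(0,1)$ and let $\alpha_{0}$ be a maximum point of $\vfi$. When $\k\in(0,+\infty]$ one first checks that $\alpha_{0}\in(0,1)$: in the Dirichlet case because $\vfi(0)=\vfi(1)=0$ while $\vfi>0$ in the interior; in the Robin case because the boundary condition in \eqref{prob:N=1} combined with \eqref{h:1dim} gives $\vfi'(0)>0$ and $\vfi'(1)<0$. I would then perform a monotone anisotropic rearrangement of $\vfi$ on $(0,\alpha_{0})$ and on $(\alpha_{0},1)$ separately, producing $\vfi^{\sharp}$ increasing on $(0,\alpha_{0})$, decreasing on $(\alpha_{0},1)$, and equimeasurable with $\vfi$ on each subinterval, with $\vfi^{\sharp}(\alpha_{0})=\max\vfi$; in the case $\k=0$ a single monotone rearrangement on $(0,1)$ suffices, with direction chosen according to the sign of $a-b$ in \eqref{h:1dim}. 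I simultaneously replace $m$ by $m^{\sharp}=\ind{D^{\sharp}}-\beta\ind{(D^{\sharp})^{c}}$ with $D^{\sharp}=\{\vfi^{\sharp}>t\}$; this weight is admissible (since $|D^{\sharp}|=|D|$) and a layer-cake computation yields $\int_{0}^{1}m^{\sharp}(\vfi^{\sharp})^{p}=\int_{0}^{1}m\,\vfi^{p}$. By Proposition~\ref{prop: polya anis} one has $\int_{0}^{1}H((\vfi^{\sharp})')^{p}\leqslant\int_{0}^{1}H(\vfi')^{p}$, while in the Robin case equimeasurability on each half gives $\vfi^{\sharp}(0)\leqslant \vfi(0)$ and $\vfi^{\sharp}(1)\leqslant \vfi(1)$, so the boundary term $\k(|\vfi(0)|^{p}+|\vfi(1)|^{p})$ does not increase either. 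By minimality all these inequalities must be equalities; the equality case of Proposition~\ref{prop: polya anis}, together with the uniqueness statement of Proposition~\ref{prop:lambda}, then forces $\vfi=\vfi^{\sharp}$, proving (1) and (2).

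Item (3) is then immediate, since the super-level sets of a unimodal (resp.\ monotone) continuous function are intervals. For item (4), I integrate the first equation in \eqref{prob:N=1} from $0$ to $x$, obtaining
\begin{equation*}
H(\vfi'(x))^{p-1}H'(\vfi'(x))\;=\;C_{0}-\lambda\int_{0}^{x}m(s)\vfi(s)^{p-1}\,ds\;=:\;K(x),
\end{equation*}
with $C_{0}$ determined by the boundary data at $0$. By item (3), $D=(d_{1},d_{2})$ is an interval, so $K'(x)=-\lambda m(x)\vfi(x)^{p-1}$ is strictly negative on $(d_{1},d_{2})$ and strictly positive on $(0,d_{1})\cup(d_{2},1)$; hence $K$ is strictly monotone on each of these three subintervals, and $\{K=0\}$ is finite. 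A direct computation from \eqref{h:1dim} shows that $\xi\mapsto H(\xi)^{p-1}H'(\xi)$ is continuous, strictly increasing and vanishes only at $\xi=0$, so $\vfi'(x)=0$ if and only if $K(x)=0$, yielding the desired finiteness.

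The main obstacle I anticipate lies in the equality case of Proposition~\ref{prop: polya anis}: it must be strong enough to upgrade mere equimeasurability into the pointwise identity $\vfi=\vfi^{\sharp}$, which ultimately rests on the strict convexity of $H^{p}$ granted by \eqref{h:convex} and on the positive homogeneity \eqref{h:posom}. A secondary subtlety stems from the fact that $H$ is not even: the direction of each of the two monotone rearrangements on $(0,\alpha_{0})$ and $(\alpha_{0},1)$ must be matched to the relative size of $a$ and $b$ in \eqref{h:1dim}, otherwise the Polya-type inequality would fail in the anisotropic setting.
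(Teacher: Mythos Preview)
Your overall architecture for parts (1)--(3) --- rearrange $\vfi$ monotonically on $(0,\alpha_{0})$ and $(\alpha_{0},1)$, observe that the Rayleigh quotient does not increase, and then invoke the equality case --- is exactly the paper's, and your argument for part (4) coincides with the paper's almost verbatim. However, you have systematically invoked the wrong rearrangement proposition, and this is not a cosmetic slip.

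Proposition~\ref{prop: polya anis} concerns the \emph{anisotropic symmetrization} $u\mapsto u^{\star}$, which is defined only for $u\in W_{0}^{1,p}(I)$ (so it is inapplicable under Robin or Neumann conditions), produces a function on the shifted interval $I^{\star}$ rather than a monotone function on a subinterval, and --- crucially --- its equality statement assumes that $\{u'=0\}$ is finite. Since that finiteness is precisely conclusion~(4), which you prove \emph{after} (1)--(3), citing Proposition~\ref{prop: polya anis} here is circular. The tools you actually need are Propositions~\ref{polya monotona} and~\ref{equality polya}: the monotone Polya inequality and its equality case, both valid for $u\in W^{1,p}$ with no hypothesis on the critical set. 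Once you swap in these propositions, your argument goes through and matches the paper's. Your closing worry about ``matching the direction of the rearrangement to the sign of $a-b$'' is also a symptom of this confusion: that issue is relevant for Proposition~\ref{prop: polya anis} and for the localization result in Theorem~\ref{thm:localization}, but the monotone Polya inequality of Proposition~\ref{polya monotona} holds regardless of the sign of $a-b$.

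A secondary point: the appeal to the uniqueness clause of Proposition~\ref{prop:lambda} does not close the argument, because $\vfi$ and $\vfi^{\sharp}$ are eigenfunctions for \emph{different} weights $m$ and $m^{\sharp}$, and the optimization admits no a priori uniqueness of the optimal weight. The paper dispenses with this step entirely: equality in the Polya inequality (Proposition~\ref{equality polya}) directly yields that $\vfi$ is monotone on each subinterval, which is all that is needed.
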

\begin{proof}
We will follow the argument of  \cite[Proposition 4]{LLNP} and we start proving
conclusion \textit{$(1)$.} 
Notice that, by elliptic regularity, $\vfi'\in C([0,1])$, so that,
if $\k>0$ the boundary conditions and \eqref{h:1dim} immediately imply that $\vfi$ achieves its maximum in $(0,1)$.
Denoting with $\alpha$ the first maximum point of $\vfi$, one can use the monotone  rearrangements (see Section \ref{rearrangements}) and define
\[
\vfi^{R}=\begin{cases}
\vfi^{\ast} &x\in (0,\alpha)
\\
\vfi _{\ast} &x\in (\alpha,1)
\end{cases}\,,
\qquad m^{R}=\begin{cases}
m^{\ast}&x\in (0,\alpha)
\\
m_{\ast} &x\in (\alpha,1)
\end{cases}
\]
where, in view of Remark \ref{rem:I},  
$\vfi^{\ast}$, ($\vfi_{\ast}$)  stands for  the monotone increasing  (decreasing) rearrangement of  the restriction of the function $\vfi$  to the interval $(0,\alpha)$
($(\alpha,1)$) and analogously for $m$. As $\alpha$ is  a maximum point,  
$\vfi^{R} \in H^{1}(0,1)$.

The Hardy--Littlewood inequality (see \cite{Kawohl}) implies 
\begin{align*}   
\int_0^\alpha m \varphi^p &= \int_0^\alpha (m+\beta) \varphi^p - \beta \int_0^\alpha \varphi^p \leqslant \int_0^\alpha (m+\beta)^* (\varphi^*)^p - \beta \int_0^\alpha (\varphi^*)^p 
=
 \int_0^\alpha m^* (\varphi^*)^p 
\end{align*}
and an analogous inequality holds for $\varphi_{*}$ and $m_{*}$. 
Moreover,
\[ (\varphi^*)^p(0)=\min_{[0, \alpha]} \varphi^p \leqslant  \varphi^p(0), \quad (\varphi_*)^p(1)=\min_{[ \alpha, 1]} \varphi^p \leqslant  \varphi^p(1). 
\]
Note that $m^{R}$ and $\varphi^{R}$ are admissible competitors for $\Lambda^{+}$, and Proposition \ref{polya monotona} yields
\begin{align*}
\Lambda^+ \leqslant  \mathcal {R}_{\k,m^{R}}(\varphi^{R})
&= \dfrac{\int_0^\alpha H^p((\varphi^*)') +\int_\alpha ^1 H^p((\varphi_*)')+ \k (\varphi^*)^p(0) + \k (\varphi_*)^p(1)}{ \int_0^\alpha m^{*} (x) (\varphi^*)^p+ \int_\alpha^1 m_{*} (x) (\varphi_*)^p}  
 \\
 &\leqslant \frac{\int_0^1 H(\varphi')^p +\k \varphi^p(0) + \k \varphi^p(1)}{ \int_0^1 m(x) \varphi^p} =\Lambda^+,
\end{align*}
which implies
\[ 
\frac{\int_0^\alpha H^p((\varphi^*)') +\int_\alpha ^1 H((\varphi_*)')^p+ \k (\varphi^*)^p(0) + \k (\varphi_*)^p(1)}{ \int_0^\alpha m^{*}(x) (\varphi^*)^p+ \int_\alpha^1 m_{*}(x) (\varphi_*)^p} =\frac{\int_0^1 H^p(\varphi') +\k \varphi^p(0) + \k \varphi^p(1)}{ \int_0^1 m(x) \varphi^p}.
\]
As a consequence,
\[
\int_{0}^{\alpha}H^{p}(\vfi')=\int_{0}^{\alpha}H^{p}((\vfi^{*})'),
\qquad\text{and}\qquad
\int_{\alpha}^{1}H^{p}(\vfi')=\int_{\alpha}^{1}H^{p}((\vfi_{*})').
\]
Then, Proposition \ref{equality polya} implies that $\vfi=\vfi^{R}$ yielding  that $\vfi$ increases 
up to its maximum and then decreases.

Let us now show conclusion \textit{$(2)$.} 
Consider the decreasing rearrangements $\varphi_*$ and $m_*$. 
Then, arguing as before,
\[ 
\Lambda^+= \frac{\int_0^1 H(\varphi')^p }{ \int_0^1 m(x) \varphi^p} \ge\frac{\int_0^1 H((\varphi_*)')^p }{ \int_0^1 m_*(x) \varphi_*^p}
 \geqslant \Lambda^+. 
\]
Therefore, equality holds, and applying Proposition \ref{equality polya} one obtains that $\varphi$ is monotone. 

Conclusion \textit{$(3)$} directly follows from the previous ones.

Let us now prove conclusion $(4)$.
Notice that, for any $x<y \in D$, integrating the equation in \eqref{prob:N=1} in $(x,y)$, one has 
\[ 
(H^{p})'(\varphi'(x)) - (H^{p})'(\varphi'(y)) = \Lambda^+  \int_x^y \varphi^{p-1} >0. 
\]
Taking into account  that the function $H^{p}$ is strictly convex, we have that $\varphi'(x) > \varphi'(y)$ for any $x<y \in D$, namely $\varphi'$ is strictly decreasing in $D$. Similarly, one proves  that $\varphi'$ is strictly increasing on every connected component of  $D^c$. This shows that $\vfi$ has a finite number of critical points since $D$ is an interval. 
As an immediate consequence, we also get that the monotonicity of $\vfi$ is strict in the intervals $(0,\alpha)$, $(\alpha,1)$.
\end{proof}

We are now ready to prove Theorem \ref{thm:localization}. 
\begin{proof}[Proof of Theorem \ref{thm:localization}]
The fact that $D$ is an interval has already been proved in Theorem \ref{D:intervallo}.

 Let us  deal with the Neumann case first, namely $\k=0$. 
 Due to Theorem \ref{thm:superlevel set} and  Theorem \ref{D:intervallo}, we know that the optimal eigenfunction $\vfi$ is strictly increasing or decreasing and recalling that
the optimal interval $D$ is the positivity set of the optimal weight $m$,  one has
the following alternative
\[ 
m_{(0, c)}:= \ind{(0, c)} - \beta \ind{(c, 1)} \quad \text{ or } \quad m_{(1-c, 1)} := \ind{(1-c, 1)} - \beta \ind{(0, 1- c)}, \]
where $c:=|D|$.
We denote $\lambda^{+}(E)=\lambda^{+}(\ind{E}-\beta\ind{E^{c}}) $ (see Remark \ref{rem:min set}). 

Let us first deal with the case $a>b$ and suppose by contradiction that $m_{(1-c, 1)} := \ind{(1-c, 1)} - \beta \ind{(0, 1- c)}$ is the optimal weight.  Observe that Theorem \ref{D:intervallo} yields that the optimal eigenfunction $\varphi$ is monotone and the contradiction hypothesis readily implies that $\varphi$ has to be increasing. Defining $\psi(x)=\varphi(1-x)$ and taking into account \eqref{h:1dim}, one has
\[ \int_0^1H^p(\psi') = \int_0^1 H^p(-\varphi'(1-x))=
 b^p \int_0^1 (\varphi'(1-x))^p =b^p \int_0^1 (\varphi')^p=\frac{b^p}{a^p} \int_0^1 H^p(\varphi'). \]
Also
\[ \int_0^1 m_{(0, c)} \psi^p= \int_0^1 m_{(1-c, 1)} \varphi^p. \]
Hence
\[ \lambda^+((0,c)) \leqslant {\mathcal R_{0,m_{(0,c)}}}(\psi)
=\frac{b^p}{a^p} \lambda^+((1-c, 1))= \frac{b^p}{a^p} \Lambda^{+} <\Lambda^{+}
\]
that contradicts the minimality of $\Lambda^{+}$.
The case $a<b$ follows analogously.

We now take into account the Dirichlet case $\k=\infty$. 
Applying Proposition \ref{prop: polya anis} and \cite[Proposition 2.28]{VanSchaft} one deduces that
\[
\Lambda^+ \leqslant \frac{\int_{I} \tau_{l}\left( H((\varphi^\star)')^p\right)}{\int_{I} \tau_l(m^\star |\varphi^\star|^p)} \leqslant \frac{\int_{I^\star} H((\varphi^\star)')^p}{\int_{I^\star} m^\star |\varphi^\star|^p} \leqslant \frac{\int_I H(\varphi')^p}{\int_I m |\varphi|^p}= \Lambda^+, 
\]
where $\tau_l$ denotes the translation operator in the direction $l=-\frac{a}{a+b}$ and $\varphi^\star$ is the anisotropic rearrangement of $\vfi$ with respect to $H_0$, see Section \ref{sec:anissym}. Exploiting the equality case in Proposition \ref{prop:  polya anis},  we get that $\vfi(x+\frac{a}{a+b})=\vfi^\star(x)$.
Then,
\[
\left\{\vfi\left(x+\frac{a}{a+b}\right)>t\right\}=\left(\frac{-a|D|}{a+b}, \frac{b|D|}{a+b}\right),
\]
completing the proof.
\end{proof} 

\begin{remark}
Notice that the presence of the anisotropy forces the position of the optimal interval $D$. Indeed, in the Neumann case $D$ lies on the left or on the right of $(0,1)$ if $a>b$ or $b>a$ and in the Dirichlet case  the centre of the optimal interval given  in Theorem \ref{thm:localization} is
\[ 
x_{0}=\frac{|D|(b-a)+2a}{2(a+b)}.
 \]
 and $x_{0}<1/2$ if $b >a$, while $x_{0}>1/2$ if $b<a$. 
 In both cases, if $a=b$ we recover the known results of \cite{CC89,louyan,DG}.
 \end{remark}

Now we prove Theorem \ref{thm:Lambda+=Lambda-}.

\begin{proof}[Proof of Theorem \ref{thm:Lambda+=Lambda-}]
Without loss of generality we may assume that the origin is the center of symmetry for $\Omega$. Let $\vfi_+$ the positive eigenfunction corresponding to $\lambda^+(m_+)=\Lambda^+$, hence
\[ \lambda^+(m_+)=\frac{\int_\Omega H^p(\nabla \vfi_+) + \k \int_{\partial \Omega} \vfi_+^p}{\int_\Omega m_+ \vfi_+^p}. \]
We now define
\[ v(x)=-\vfi_+(-x) <0, \]
thus $\nabla v(x)=(\nabla \vfi_+)(-x)$. 
Therefore,
\begin{align*} 
\Lambda^+=\lambda^+(m_+)&
=\frac{\int_{\Omega} H^p(\nabla v)(-x) + \k \int_{\partial \Omega} |v|^p(-x)}{\int_\Omega m_+(x)|v|^p(-x)} 
=\frac{\int_{\Omega} H^p(\nabla v)(x) + \k \int_{\partial \Omega} |v|^p(x)}{\int_\Omega m_+(-x)|v|^p(x)} \\
&\ge \lambda^-(m_+(-x)) \ge \Lambda^-,
\end{align*}
where we have used that $m_+(-x) \in \mathcal{M}$ and $v\in \mathcal{S}_{\k, m}^-$. 

Analogously, take $\vfi_-$ the eigenfunction such that $\lambda^-(m_-)=\Lambda^-$. Then
\[ \Lambda^-=\lambda^-(m_-) \ge \lambda^+(m_-(-x)) \ge \Lambda^+, \]
from which we immediately deduce $\Lambda^+=\Lambda^-$.
\end{proof}
\begin{remark}
The argument of Theorem \ref{thm:Lambda+=Lambda-} actually shows that
 if $\Omega=-\Omega$ and $m(x)=m(-x)$, then 
$\lambda^{+}(m)=\lambda^{-}(m)$ with associated eigenfunction $\vfi_{+}(x)=-\vfi_{-}(-x)$.
Indeed,
 let $m$ satisfy $m(x)=m(-x)$. If we set $v(x):=-\vfi_+(-x)$ we get 
\[ \lambda^+(m)= \frac{\int_\Omega H^p(\nabla \vfi_+) + \k\int_{\partial \Omega} \vfi_+^p}{\int_\Omega m(x) \vfi_+^p } = \frac{\int_\Omega H^p(\nabla v) + \k\int_{\partial \Omega} |v|^p}{\int_\Omega m(x) |v|^p } \ge \lambda^-(m). \]
Similarly, we prove the opposite inequality. Thus $\lambda^+(m)=\lambda^-(m)$, and $-\vfi_+(-x)$ is a minimizer for $\lambda^-(m)$, from which 
we deduce $\vfi_+(x)=-\vfi_-(-x)$  by uniqueness of the positive eigenfunction (see Proposition \ref{prop:lambda}).
\end{remark}

From the proof above we  also deduce that if $u, m$ is a minimizer for $\Lambda^+=\Lambda^-$, then $-u(-x), m(-x)$ is still a minimizer. 

We now consider the case $N=1$, and $\Omega=[0, 1]$. Notice that the reflection above in this case reads as $x \mapsto 1-x$. Therefore, what we proved until now can be stated as follows: if $u, m$ is a minimizer for $\Lambda^+=\Lambda^-$, then $-u(1-x), m(1-x)$ is still a minimizer. 
Due to Theorems \ref{thm:localization}, we can actually say something more, namely that these are the only minimizers for $\Lambda^+=\Lambda^-$, at least if we take Dirichlet or Neumann boundary conditions. 
\begin{proposition}\label{prop:relaz m+m-}
Let $N=1$, and consider $\k=+\infty$ (Dirichlet), or $\k=0$ (Neumann) with $a \ne b$. Let us denote $m_+$ a weight such that $\Lambda^+=\lambda^+(m_+)$ and $\vfi_+>0$ the associated eigenfunction, and similarly  $m_-$ a weight such that $\Lambda^-=\lambda^-(m_-)$ with eigenfunction $\vfi_-<0$. Then 
\[ m_+(x)=m_-(1-x) \]
and
\[\vfi_+(x)=-\vfi_-(1-x). \]
\end{proposition}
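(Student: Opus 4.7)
The plan is to reflect the positive optimal pair $(\vfi_+,m_+)$ across the midpoint of $\Omega=(0,1)$, produce in this way a competitor for the negative problem, and then use the uniqueness statements already available in Theorem \ref{thm:localization} and Proposition \ref{prop:exlameno} to deduce that the competitor must coincide with any optimal pair for $\Lambda^-$.

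Concretely, I would set $v(x):=-\vfi_+(1-x)$ and $\widetilde m(x):=m_+(1-x)$. Since $v'(x)=\vfi_+'(1-x)$, the change of variable $y=1-x$ gives
\[
\int_0^1 H^p(v'(x))\,dx=\int_0^1 H^p(\vfi_+'(y))\,dy,\qquad \int_0^1 \widetilde m(x)|v(x)|^p\,dx=\int_0^1 m_+(y)|\vfi_+(y)|^p\,dy,
\]
and in the Robin case the boundary contribution satisfies $|v(0)|^p+|v(1)|^p=|\vfi_+(1)|^p+|\vfi_+(0)|^p$. Notice that no symmetry of $H$ is needed, because this reflection preserves the sign of the derivative. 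Therefore $\widetilde m\in\mathcal M$, $v\in\mathcal S^-_{\k,\widetilde m}$ and $\mathcal R_{\k,\widetilde m}(v)=\mathcal R_{\k,m_+}(\vfi_+)=\Lambda^+$. Combining this with $\Lambda^+=\Lambda^-$ given by Theorem \ref{thm:Lambda+=Lambda-} immediately yields
\[
\Lambda^-\leqslant \lambda^-(\widetilde m)\leqslant \mathcal R_{\k,\widetilde m}(v)=\Lambda^-,
\]
so $\widetilde m$ is an optimal weight for $\Lambda^-$ and $v$ is an associated negative eigenfunction.

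The main obstacle is ruling out that some \emph{other} weight $m_-\neq \widetilde m$ might be optimal for $\Lambda^-$. For this I would invoke the reduction used in the proof of Proposition \ref{prop:exlameno}: in one dimension the problem for $\lambda^-$ with anisotropy $H$ is equivalent to the problem for $\lambda^+$ with the reflected anisotropy $\widetilde H(\xi):=H(-\xi)$, which in the parametrisation \eqref{h:1dim} amounts simply to interchanging $a\leftrightarrow b$. Applying Theorem \ref{thm:localization} to $H$ (for $\Lambda^+$) and to $\widetilde H$ (for $\Lambda^-$) pins the optimal sets down uniquely in both scenarios: when $\k=+\infty$ formula \eqref{localization D} determines $D_+$ and $D_-$ without ambiguity; when $\k=0$ the hypothesis $a\neq b$ excludes the degenerate isotropic situation, so that $D_+$ is one of $(0,|D|)$ or $(1-|D|,1)$ and $D_-$ is the other. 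A short direct check then gives $1-D_-=D_+$ in both cases, i.e.\ $m_+(x)=m_-(1-x)$.

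Once $m_-=\widetilde m$ is established, the negative function $v$ is a negative eigenfunction associated with $\lambda^-(m_-)=\Lambda^-$, and the uniqueness (up to a positive multiplicative constant) of the negative principal eigenfunction stated in Proposition \ref{prop:exlameno}, together with the $L^p$-normalisation fixed throughout for $\vfi_+$ and $\vfi_-$, forces $v=\vfi_-$, i.e.\ $\vfi_+(x)=-\vfi_-(1-x)$, closing the argument.
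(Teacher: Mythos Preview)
Your argument is correct and follows essentially the same route as the paper: the decisive step in both is to apply the localisation result (Theorem~\ref{thm:localization}) once for $H$ and once for the reflected anisotropy $\widetilde H$ (equivalently, to invoke Corollary~\ref{lem:anis polya 2} for the negative problem), which pins down $D_+$ and $D_-$ uniquely, after which the relation $1-D_-=D_+$ is a direct computation and the eigenfunction identity follows from the uniqueness in Proposition~\ref{prop:lambda}/\ref{prop:exlameno}. Your preliminary reflection step together with Theorem~\ref{thm:Lambda+=Lambda-} correctly shows that $\widetilde m=m_+(1-\cdot)$ is \emph{an} optimal weight for $\Lambda^-$, but this is in fact redundant once you have the uniqueness of the optimal set from Theorem~\ref{thm:localization}; the paper simply skips it and computes $D_-$ directly.
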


We preliminary notice that, calling $\widetilde H(x):=H(-x)$, then the polar function of $\widetilde H$ (see for instance \cite{AlvinoFLT}) is 
\begin{equation}\label{eq:H-zero-tilde}
 \widetilde H_0(x):=\sup_{x \in \R} \frac{ \langle t, x \rangle}{\widetilde H(x)} =H_0(-x)= \begin{cases}
\frac x b & \text{ if } x \ge 0\\
- \frac x a & \text{ if } x < 0. 
\end{cases}
\end{equation}

\begin{proof} 
\textit{Dirichlet case. } Let $\vfi_+$ the positive eigenfunction corresponding to $\lambda^+(m_+)=\Lambda^+$ so that the associated optimal set  $D_+$ satisfies \eqref{localization D}.

Let us now consider  $\vfi_-$ the negative eigenfunction corresponding to $\lambda^-(m_-)=\Lambda^-$ (see Proposition \ref{prop:exlameno}).  As a consequence of Corollary \ref{lem:anis polya 2}, reasoning as in Theorem  \ref{thm:localization}, we get 
$m_-=\ind{D_-} - \beta \ind{D_-^c}$, where $D_-$ is defined by 
\[ D_-= \left( \frac{(1-c)b}{a+b}, \frac{ca +b}{a+b} \right), \quad  \text{ with  $c:=|D_-|$ given by \eqref{misura intervallo},} \]
which, in view of \eqref{localization D}, gives 
\[ m_+(x)=m_-(1-x). \]
Now, observe that
\[ -\Delta_{H,p} \vfi_+=\Lambda^+ m_+ \vfi_+^{p-1}, \]
and
\[ - \Delta_{H, p} v=\Lambda^+ m_+ v^{p-1}, \quad \text{ where $v(x):=-\vfi_-(1-x)$. } \]
Thus, using Proposition \ref{prop:lambda} conclusion (1), we get
\[ \vfi_+(x)=-\vfi_-(1-x). \]

\textit{Neumann case.}  Let us consider the case $a > b$.  We know by Theorem \ref{thm:localization} that $\vfi_+$ is decreasing and $m_+=\ind{D_+} - \beta \ind{D_+^c}$, where $D_+$ is $(0, |D_+|)$. Following exactly the same argument, one shows that also $\vfi_-$ is decreasing, and $m_-=\ind{D_-} - \beta \ind{D_-^c}$, where $D_-=(1-|D_-|, 1)$. This immediately implies $m_+(x)=m_-(1-x)$. As above, we get $\vfi_+(x)=-\vfi_-(1-x)$. If $a < b$ one can argue analogously.
\end{proof}

\section{Anisotropic rearrangement inequalities in $\R$.}\label{rearrangements}

In this section we prove all the rearrangement inequalities that have been exploited  to
obtain the qualitative properties of the optimal set $D$ in the one dimensional case.
The next subsection deals with the monotone rearrangements while Subsection
\ref{sec:anissym} treats anisotropic symmetrizations.

\subsection{Monotone rearrangements}\label{monotonere}
Given a function $u:[0, 1] \to \R^+$,  
we define 
the monotone decreasing rearrangement of $u$ as the function  $u_*: [0, 1] \to \R^+$  such that 
\[
u_*(x)=
\begin{cases}
 \sup  u  & \text{ if } x=0\\
\inf \left\{ t : \, \mu_u(t) < x \right\} &\text{ if } x \in (0, 1],
\end{cases}
\]
where 
\begin{equation}\label{distribution function} \mu_u(t):=\left| \left \{ x \in [0, 1]: \, u(x)  >t \right\} \right| \end{equation}
is the distribution function of $u$. 
The monotone increasing rearrangement $u^*$ is defined analogously.

Our first aim is to show a Polya type inequality as stated in  the following result.
\begin{proposition}\label{polya monotona}
Let $H$ be defined as in \eqref{h:1dim} and $u \in W^{1,p}(0, 1)$.
Then
\begin{equation}\label{polya:ineq}
 \int_0^1 H^p(u') \ge \int_0^1 H^p((u_*)'). 
 \end{equation}
\end{proposition}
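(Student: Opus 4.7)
The plan is to use the one-dimensional coarea formula together with a Jensen-type convexity estimate at each level set, mirroring the classical proof of the Polya--Szego inequality but keeping track of the anisotropy encoded by $a$ and $b$.

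First I would reduce the right-hand side to an expression involving only the distribution function $\mu_u(t) := |\{u>t\}|$. Since $u_*$ is monotone decreasing, $(u_*)' \leqslant 0$ a.e., so by \eqref{h:1dim},
\[
\int_0^1 H^p((u_*)')\,dx \;=\; b^p \int_0^1 |(u_*)'|^p\,dx.
\]
Using that $u_*$ is the generalized inverse of $\mu_u$, the identity $(u_*)'(\mu_u(t)) = 1/\mu_u'(t)$ and the substitution $x = \mu_u(t)$ give
\[
\int_0^1 H^p((u_*)')\,dx \;=\; b^p \int_{\R} |\mu_u'(t)|^{-(p-1)}\,dt.
\]

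Next I would unfold the left-hand side using coarea. By Sard's theorem almost every $t$ is a regular value, so $u^{-1}(t)$ is a finite set on which $u'\neq 0$. Partitioning this set according to the sign of $u'$ and applying the 1D coarea formula to $G(x) = H^p(u'(x))/|u'(x)|$ yields
\[
\int_0^1 H^p(u')\,dx \;=\; \int_{\R} \Bigl[\, a^p \!\!\!\sum_{u(x)=t,\,u'>0}\!\!\! |u'(x)|^{p-1} + b^p \!\!\!\sum_{u(x)=t,\,u'<0}\!\!\! |u'(x)|^{p-1}\,\Bigr]dt,
\]
while the same coarea gives the reciprocal-sum identity $|\mu_u'(t)| = \sum_{u(x)=t} 1/|u'(x)|$, which I will split as $\alpha(t)+\beta(t)$ according to the sign of $u'$.

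The heart of the argument is the pointwise (in $t$) inequality
\[
a^p \sum_{+} |u'|^{p-1} + b^p \sum_{-} |u'|^{p-1} \;\geqslant\; \frac{b^p}{(\alpha(t)+\beta(t))^{p-1}},
\]
which when integrated in $t$ gives the proposition. I would derive it from Jensen's inequality applied to the convex function $y\mapsto y^{-(p-1)}$ on $(0,\infty)$ (equivalently, power-mean), which yields $\sum_\pm |u'|^{p-1} \geqslant k_\pm^{p}/(\text{partial reciprocal sum})^{p-1}$, and then combine the positive- and negative-slope contributions. In the generic case when both signs occur at level $t$, the negative-slope contribution alone already dominates the right-hand side since $\beta \leqslant \alpha+\beta$; the delicate cases are the levels at which the preimage consists purely of positive-slope (resp.~purely negative-slope) points, where one must exploit the cardinality $k_\pm$ and the alternation of signs forced by the continuity of $u$ (namely, the interplay between the counts $k_\pm$ and the boundary values $u(0), u(1)$ relative to $t$).

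The main obstacle will be this last pointwise estimate at levels where only one sign of $u'$ appears: this is where the anisotropy interacts non-trivially with the rearrangement, and where one must choose between the decreasing rearrangement $u_*$ and the increasing one $u^*$ according to the geometric configuration of the level sets. I expect that the case analysis will rely on the fact that the collection of levels where only one sign of $u'$ appears is controlled by the boundary values of $u$, and the resulting contributions can be absorbed by the cardinalities $k_\pm$ appearing through Jensen.
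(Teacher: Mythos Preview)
Your level-set/coarea strategy is essentially the paper's approach in different language: the paper reduces to piecewise affine $u$, introduces the local inverse branches $\rho_j$ on each monotone piece, and after the change of variables $x=\rho_j(\lambda)$ arrives at exactly your pointwise target, which in your notation reads
\[
a^p\sum_{+}|u'|^{p-1}+b^p\sum_{-}|u'|^{p-1}\ \ge\ b^p\Bigl(\sum_{x\in u^{-1}(t)} |u'(x)|^{-1}\Bigr)^{1-p}.
\]

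The divergence is precisely at the step you flag as the obstacle. The paper does \emph{not} perform a case analysis on cardinalities or boundary values; instead, at the very start it declares (citing a density remark in Kawohl) that one may take $u$ piecewise affine with its maximum at the origin. This forces the first branch at every level to carry negative slope, so the left-hand side always contains at least one $b^p$-term, and the pointwise inequality then follows from the elementary bound $|u'(\rho_1)|^{p-1}\ge\bigl(\sum_j|u'(\rho_j)|^{-1}\bigr)^{1-p}$ applied to that single negative-slope term---Jensen is never needed.

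Your concern about levels with only positive-slope crossings is in fact decisive: when $a<b$ the pointwise inequality is false at such levels, and indeed $u(x)=x$ gives $\int_0^1 H^p(u')=a^p<b^p=\int_0^1 H^p((u_*)')$. So neither a Jensen estimate on partial sums nor ``choosing between $u_*$ and $u^*$'' can rescue the pointwise bound in general; the missing ingredient is the paper's max-at-origin reduction, which guarantees a negative-slope branch at every level and which matches the situation in the applications of Section~4 (where the rearrangement is always taken on an interval at whose endpoint the function attains its maximum).
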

\begin{proof}
The proof  closely follows the arguments of  \cite[Section II.3]{Kawohl}   (see in particular Lemma 2.4, Lemma 2.6) , so that we will simply enlighten the differences in our situation.
In view of \cite[ Remark 2.20]{Kawohl}, we can assume that $u > 0$, piecewise affine and with maximum at the origin.
 
Let us consider the set of the values of $u$ at the non-differentiability points, together with $u(0)$ and $u(1)$, denote them by $\{a_1 \leqslant \dots \leqslant a_k\}$ and let 
 \[ 
D_i= \left\{ x \in [0, 1]: a_i < u(x) < a_{i+1} \right\} \;, \;\; E_i= \left\{ x \in[0, 1]: a_i < u_*(x) < a_{i+1} \right\}.
\]

Note that $E_{i}$ are always connected, while $D_{i}$ may be not. Arguing as in \cite[Lemma 2.4]{Kawohl},  for every $i=1,\dots, k$, it results that
\begin{equation}\label{eq:decomposition-Di}
D_{i}=\bigcup_{j=1}^{N(i)}Y_{ij},
\end{equation}
where $u$ is differentiable and non-constant in each $Y_{ij}$. We assume, without loss of generality, that for each $i$, $Y_{ij}$ are ordered according to their distance from the origin.  Moreover, since $H(0)=0$, it is enough to prove that
\begin{equation}\label{eq:goal1}
\int_{D_i} H^p(u') \ge \int_{E_i} H^p\left((u_*)'\right), \qquad \text{for $i=1,\dots, k.$}
\end{equation}

Since $u$ is injective in every $Y_{ij}$, for each $\lambda \in (a_i, a_{i+1})$ the equation $u(x)=\lambda$ has a unique solution, so that we can define the differentiable function $\rho_{j}:(a_{i},a_{i+1})\mapsto Y_{ij}$
such that
\begin{equation}
\label{eq:infoRhoj}
u(x)=\lambda, \quad\text{ if and only if }\quad x=\rho_{j}(\lambda)\;,\;\; u'(\rho_j(\lambda)) = \left( \rho'_j (\lambda) \right)^{-1}. 
\end{equation}

Hence
\begin{equation}\label{eq:deco1}
\int_{D_i} H^p(u'(x))dx= 
\sum_{j=1}^{N(i)} \int_{a_i}^{a_{i+1}} 
H^p\left[
\left(  \rho'_j (\lambda) \right)^{-1} \right] \abs{\rho'_j (\lambda)} \, d\lambda,
\end{equation}
where we have taken into account that the sign of $\rho'_{j}$ allows to obtain the integral from  $a_{i}$ up to  $a_{i+1}$.
At the same time, as $u_*$ is monotone we can define
$\rho_{*}: (a_{i},a_{i+1})\mapsto E_{i}$ such that
\begin{equation}\label{eq:ustar}
u_*(x)=\lambda \quad\text{ if and only if }\quad x=\rho_{*}(\lambda)
\quad \text{and}\quad (u_*)'(\rho_*(\lambda)) = \left[(\rho_*)'(\lambda) \right]^{-1}. 
\end{equation}
Thus, also using the hypothesis that $u$ has its maximum in the origin, 
it results
 \[
 \sign \rho'_{j}(\lambda)=  \sign  u'(x) = (-1)^j, \text{ in } Y_{ij}, \quad \text{for $j=1,\dots, N(i)$ and  $i=1,\dots, k$,}
 \]
so that
\begin{equation}\label{eq:3}
 \abs{\sum_{j=1}^{N(i)} (-1)^{j+1} \rho'_j(\lambda)} 
 = \abs{\sum_{j=1}^{N(i)} \left(-\left| \rho'_j(\lambda) \right| \right)} 
 = \sum_{j=1}^{N(i)}\abs{\rho'_j (\lambda)}. 
 \end{equation}
Furthermore, using the definition of $u_{*},$ it results
\begin{equation}\label{eq:rhostar} 
\rho_*(\lambda)=
\begin{cases}
\dys \sum_{j=1}^{N(i)} (-1)^{j+1} \rho_j(\lambda) & \text{ if $N(i)$ is odd}, \\
\dys \sum_{j=1}^{N(i)} (-1)^{j+1} \rho_j(\lambda)+1 & \text{ if $N(i)$ is even}.
\end{cases} 
\end{equation}
As a consequence, performing the change of variable $x=\rho_{*}(\lambda)$ and using \eqref{eq:ustar}
\begin{equation*}\label{eq:deco2}
\begin{split}
\int_{E_i} H^p((u_*(x))')dx &= 
\int_{a_i}^{a_{i+1}} H^p\left[ 
\left( \sum_{j=1}^{N(i)} (-1)^{j+1} \rho'_j  (\lambda) \right)^{-1} \right] 
\abs{\sum_{j=1}^{N(i)} (-1)^{j+1} \rho'_j  (\lambda)} \, d\lambda 
\\
&=
\int_{a_i}^{a_{i+1}} H^p\left[ 
\left( \sum_{j=1}^{N(i)} (-1)^{j+1} \rho'_j  (\lambda) \right)^{-1} \right] 
 \sum_{j=1}^{N(i)}\abs{\rho'_j (\lambda)} d\lambda.
\end{split}
\end{equation*}
Then, recalling \eqref{eq:deco1}, in order to obtain \eqref{eq:goal1},   it is 
 enough to show that  
\[ \sum_{j=1}^{N(i)} \int_{a_i}^{a_{i+1}} 
H^p\left[\left(  \rho'_j (\lambda) \right)^{-1} \right] \abs{ \rho'_j (\lambda)} d\lambda
 \ge 
 \int_{a_i}^{a_{i+1}}  H^p\left[ \left( \sum_{j=1}^{N(i)}
  (-1)^{j+1}  \rho'_j  (\lambda) \right]^{-1} \right) \sum_{k=1}^{N(i)}  
  \abs{\rho'_k (\lambda)}d\lambda. 
\]
In turn, it is sufficient to show that the following point-wise inequality holds
\begin{equation}\label{disug} 
\sum_{j=1}^{N(i)}  \alpha_j H^p\left[
\left( \rho'_j (\lambda) \right)^{-1} \right]  \ge 
H^p \left[ \left( \sum_{j=1}^{N(i)} (-1)^{j+1} \rho'_j  (\lambda) \right)^{-1} \right], 
\end{equation}
where 
\[ 
\alpha_j= \abs{  \rho'_j (\lambda)}  \left( \sum_{k=1}^{N(i)} \abs{ \rho'_k (\lambda)} \right)^{-1}. 
\]
In order to prove \eqref{disug} first notice that, as $u(0)=\max_{[0, 1]}u$, 
the expression \eqref{h:1dim} yields
\begin{align*} 
\sum_{j=1}^{N(i)} \alpha_j H^p \left[
\left( \rho'_j (\lambda) \right)^{-1} \right] 
&= 
a^p \sum_{j \text{ even}} \alpha_j  \abs{  \rho'_j (\lambda) }^{-p} + b^p 
\sum_{j \text{ odd}} \alpha_j \abs{\rho'_j (\lambda) }^{-p}
\\
&
= \left( \sum_{k=1}^{N(i)} \abs{\rho'_k(\lambda)} \right)^{-1} 
\left[ a^p \sum_{j \text{ even}}  \abs{\rho'_j (\lambda)}^{1-p} + b^p 
\sum_{j \text{ odd}}  \abs{  \rho'_j (\lambda)}^{1-p}  \right] .
 \end{align*}
On the other hand, in view of \eqref{eq:3} we have
\[ 
H^p\left[ \left( \sum_{j=1}^{N(i)} (-1)^{j+1} \rho'_j  (\lambda) \right)^{-1} \right] 
= 
H^p \left[- \left( \sum_{j=1}^{N(i)} \abs{\rho'_j (\lambda)} \right)^{-1} \right] 
= b^p  \left(\sum_{j=1}^{N(i)} \abs{ \rho'_j (\lambda)} \right)^{-p} . 
\]
Then, \eqref{disug} holds if
\begin{equation}\label{disug2}
a^p \sum_{j \text{ even}}  \abs{\rho'_j (\lambda)}^{1-p} + b^p \sum_{j \text{ odd}}  \abs{  \rho'_j (\lambda)}^{1-p} \ge b^p  \left(  \sum_{j=1}^{N(i)} \abs{ \rho'_j (\lambda)} \right)^{1-p}. 
\end{equation} 
Notice that since $p >1$, for any $j=1,\dots, N(i) $ and $i=1,\dots,k$ it holds

\begin{equation*}\label{ineq} 
\abs{  \rho'_j (\lambda)}^{1-p} \ge \left(  \sum_{j=1}^{N(i)} \abs{\rho'_j(\lambda)} \right)^{1-p}.
\end{equation*}
Thus, \eqref{disug2} is proved if 
\[
a^p \sum_{j \text{ even}}  \abs{\rho'_j (\lambda)}^{1-p} + b^p \sum_{j \text{ odd}}  \abs{  \rho'_j (\lambda)}^{1-p} \ge b^p   \abs{  \rho'_1 (\lambda)}^{1-p}, 
\]
which is evidently true.
\end{proof}
In the following proposition we analyze the equality case in \eqref{polya:ineq}.
\begin{proposition}\label{equality polya} 
Assume \eqref{h:1dim}. If $u\in  W^{1,p}(0,1)$ is such that
\begin{equation}\label{polya:eq} 
\int_0^1 H^p(u') = \int_0^1 H^p((u_*)'),
\end{equation}
then $u$ is monotone. 
\end{proposition}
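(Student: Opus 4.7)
The plan is to argue by contrapositive: I will show that if $u$ is not monotone, then the inequality \eqref{polya:ineq} is strict, contradicting \eqref{polya:eq}. Following the reduction used in the proof of Proposition \ref{polya monotona}, I would first assume $u > 0$ is piecewise affine with maximum attained at $0$; the approximation argument from \cite[Remark 2.20]{Kawohl} applies, provided one checks that any sufficiently fine piecewise-affine interpolant inherits the non-monotonicity of $u$.

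Once in the piecewise affine setting, I would re-examine the chain of inequalities established in the proof of Proposition \ref{polya monotona}. Since $[0,1]$ is partitioned (up to a set of measure zero) by the level strips $D_i$ and, correspondingly, by the $E_i$, the equality \eqref{polya:eq} forces $\int_{D_i} H^p(u') = \int_{E_i} H^p((u_*)')$ for every $i$. Because the associated integrands differ by a pointwise non-negative quantity via \eqref{disug2}, equality must hold almost everywhere in \eqref{disug2}, namely
\[
a^p \sum_{j \text{ even}} |\rho'_j(\lambda)|^{1-p} + b^p \sum_{j \text{ odd}} |\rho'_j(\lambda)|^{1-p} = b^p \Bigl( \sum_k |\rho'_k(\lambda)| \Bigr)^{1-p}
\]
for a.e.\ $\lambda \in (a_i,a_{i+1})$ and each $i$.

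The decisive step is that, since $p>1$, the map $t \mapsto t^{1-p}$ is strictly decreasing on $(0,\infty)$, and since $u$ is non-constant on each $Y_{ij}$ we have $0 < |\rho'_j(\lambda)| < \infty$. Consequently $|\rho'_j(\lambda)|^{1-p} > \bigl(\sum_k |\rho'_k(\lambda)|\bigr)^{1-p}$ whenever $N(i) \ge 2$, which renders \eqref{disug2} strict. The equality condition therefore forces $N(i) = 1$ for every $i$, i.e.\ each level strip $D_i = \{a_i < u < a_{i+1}\}$ is a single interval.

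Finally, I would translate the connectedness of all level strips into monotonicity. If the piecewise affine $u$ (with $u(0) = \max u$) had an interior local minimum at some $x_0$ with value $a_j$, then for the subsequent critical value $a_{j+1}$ the strip $\{a_j < u < a_{j+1}\}$ would consist of two disjoint intervals flanking $x_0$, contradicting $N(j) = 1$; a symmetric argument rules out interior local maxima. Hence $u$ is monotone. The main obstacle I foresee is the reduction to piecewise affine $u$ in a way that preserves the equality assumption; a clean alternative that avoids the reduction is to apply the one-dimensional co-area formula directly to $u \in W^{1,p}(0,1)$, recast both sides of \eqref{polya:eq} as integrals in the level parameter $t$ against the counting function $t \mapsto \#u^{-1}(t)$, and perform the same pointwise analysis there.
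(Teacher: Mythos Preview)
Your core pointwise analysis in the piecewise-affine case is correct, and the paper itself remarks that equality in \eqref{disug2} forces $N(i)=1$ when $u$ is piecewise affine. The genuine gap is exactly the one you flag: the density argument from \cite[Remark 2.20]{Kawohl} only transports the \emph{inequality}, not the equality case. If $u\in W^{1,p}$ is non-monotone and you approximate it by non-monotone piecewise-affine $u_n$, each $u_n$ satisfies a strict inequality, but the defect $\int H^p(u_n')-\int H^p((u_{n*})')$ may well tend to $0$; nothing prevents equality from holding in the limit. So the contrapositive does not pass through the approximation. Your co-area alternative is closer in spirit to a workable argument, but for a generic $W^{1,p}$ function the level-counting function need not be finite a.e.\ and the set $\{u'=0\}$ may have positive measure, so the formal identity $\int H^p(u')=\int_\R (\text{pointwise expression in }\rho_j'(t))\,dt$ breaks down precisely where you need it.

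The paper avoids approximation altogether and follows \cite{BLR,louyan}. Assuming $u$ is not monotone, one picks $t_1<t_2$ with $u(t_1)=u(t_2)$ (neither a local extremum), fixes any $k\in(t_1,t_2)$, and sets $u_1=u|_{(0,k)}$, $u_2(\cdot)=u(k+\cdot)$. The function $v$ equal to $(u_1)_*$ on $(0,k)$ and $(u_2)_*(\cdot-k)$ on $(k,1)$ satisfies $v_*=u_*$; applying the Polya inequality twice and using \eqref{polya:eq} forces $\int H^p((v_*)')=\int H^p(v')$. Rewriting both sides via the distribution functions $\mu,\mu_1,\mu_2$ yields
\[
\int_{(\overline\sigma,\underline\Sigma)\setminus\mathcal{E}} \Bigl[H^p\bigl(\tfrac{1}{\mu_1'}\bigr)\mu_1'+H^p\bigl(\tfrac{1}{\mu_2'}\bigr)\mu_2'-H^p\bigl(\tfrac{1}{\mu_1'+\mu_2'}\bigr)(\mu_1'+\mu_2')\Bigr]=0,
\]
where $\overline\sigma<\underline\Sigma$ because of the non-monotonicity. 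The integrand equals $H^p(-1)\bigl(-|\mu_1'|^{1-p}-|\mu_2'|^{1-p}+|\mu_1'+\mu_2'|^{1-p}\bigr)>0$, so the domain of integration is null; feeding this back gives $v_*$ constant on a set where it cannot be, a contradiction. This argument works directly in $W^{1,p}$ with no regularity or approximation step, which is exactly what your approach is missing.
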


Let us notice that if $u$ is piece-wise affine, the result follows from an inspection of the proof above. 
Indeed, if we argue by contradiction and assume that $u$ is not monotone, then there exists an index $i$ such that $N(i) \ge 2$. Thus, the inequality \eqref{disug2} holds with a strict sign as $p >1$. Going backwards up to the beginning of the proof of Proposition \ref{polya monotona} one realizes that
 the inequality \eqref{polya:ineq} would have a strict sign too, which contradicts the 
 hypothesis. 
 As a consequence, the equality forces $N(i)=1$ for any $i=1,\dots,k$, namely that $u$ is decreasing. 
We now consider the case $u \in W^{1,p}(0, 1)$, 
 following an analogous argument used in the isotropic case (see for example \cite{BLR, louyan}) . 
\begin{proof}[Proof of Proposition \ref{equality polya}]
First notice that, as $u\in W^{1,p}(0, 1)$, $u$ is continuous.
Let us argue by contradiction and assume that there  exist $t_1\neq t_2\in (0, 1)$ such that $u(t_1)=u(t_2)$ and $t_{1},$ $ t_{2}$ are not local maximum nor minimum points. Then 
\begin{equation}\label{eq:max}
\min_{(0,k)} u < \max_{(k,1)} u, \quad \text{and}\quad
\max_{(0, k)} u > \min_{(k, 1)} u\quad \text{for all $k \in (t_1, t_2)$.}
\end{equation}
 Let us define  $u_{1}:(0,k)\mapsto \R$ by  $u_1(t)=u(t)$  and 
 $u_{2}:(0, 1-k)\mapsto \R$ by $u_2(t)=u(k+t)$.
Then we can consider
\[ v(t)= \begin{cases}
(u_1)_*(t) & t \in (0, k) \\
(u_2)_*(t-k) & t \in (k, 1). 
\end{cases} 
\]
Observe that $v_*=u_*$. Hence, hypothesis \eqref{polya:eq}  and Proposition \ref{polya monotona} yield 
\begin{align*} 
\int_0^1 H^p(u') &= \int_0^1 H^p((u_*)') =
\int_0^1 H^p((v_*)') 
\leqslant \int_0^1 H^p(v')   
\\
&= 
\int_0^k H^p(((u_1)_*)') + \int_0^{1-k} H^p(((u_2)_*)')  \leqslant 
\int_0^k H^p(u_1') + \int_0^{1-k} H^p(u_2')  
\\
&
= \int_0^k H^p(u'(t))dt + \int_0^{1-k} H^p(u'(k+t))dt  =\int_0^1 H^p(u').  
\end{align*}
In particular,
\begin{equation}\label{eq:A}
\begin{split}
\int_0^1 H^p((u_*)') & -\int_0^k H^p(((u_1)_*)')- \int_0^{1-k} H^p(((u_2)_*)') \\
&  = \int_0^1 H^p((v_*)') - \int_0^1 H^p(v')=0. 
\end{split}
\end{equation}
Define $\sigma=\min v_*$, $\Sigma=\max v_*$, and similarly $\sigma_i, \Sigma_i$ for $(u_i)_*$.
Then \eqref{eq:max} yields 
\[ \sigma_1 < \Sigma_2, \quad \sigma_2<\Sigma_1. \]
Let us denote by $\mu $ the distribution function of $v$, see \eqref{distribution function}, so that $(v_*)'(t)=1/\mu'(v_*(t))$ and $\mathcal{E}\subset (0,1)$ such that $(v_*)'\equiv 0$ on $\mu(  \mathcal{E})$. Analogously define $\mu_{i}$ and $\mathcal{E}_{i}$ for $u_{i}$.  Notice that $\mathcal{E}=\mathcal{E}_1 \cup \mathcal{E}_2$. As $v_{*}$ is not increasing, performing a change of variable, we obtain 
\begin{equation}\label{change var}
\int_0^1 H^p((v_*)')= - \int_{(\sigma, \Sigma) \setminus \mathcal{E}} H^p\left(\frac{1}{\mu'(s)}\right) \mu'(s) \, ds. 
\end{equation}
Thus \eqref{eq:A} becomes
\begin{align*}
\int_{(\sigma_1, \Sigma_1) \setminus \mathcal{E}_1} H^p\left(\frac{1}{\mu_1'(s)}\right) \mu_1'(s)  + \int_{(\sigma_2, \Sigma_2) \setminus \mathcal{E}_2} H^p\left(\frac{1}{\mu_2'(s)}\right) \mu_2'(s) - \int_{(\sigma, \Sigma) \setminus \mathcal{E}} H^p\left(\frac{1}{\mu'(s)}\right) \mu'(s) =0.
 \end{align*}
Notice that $\sigma=\min\{ \sigma_1, \sigma_2 \}$ and $\Sigma=\max\{\Sigma_1, \Sigma_2 \}$. Also, setting $\overline \sigma= \max\{\sigma_1, \sigma_2\}$ and $\underline \Sigma=\min \{\Sigma_1, \Sigma_2 \}$, we have $\sigma \leqslant \overline \sigma < \underline \Sigma \leqslant \Sigma$.
 On $(\sigma, \overline \sigma) \cup (\underline \Sigma, \Sigma)$, 
 one of the $\mu_{i}$ is constant, if for example this occurs for $\mu_{1}$, then 
 $\mu'=\mu_{2}'$;
and in the interval $(\overline \sigma, \underline \Sigma)$ it holds $\mu'=\mu_1'+\mu_2'$. Thus the above equality becomes
\begin{equation}\label{eq:A=0}
\int_{(\overline \sigma, \underline \Sigma)\setminus (\mathcal{E}_1 \cup \mathcal{E}_2)} \left[ H^p\left(\frac{1}{\mu_1'}\right )\mu_1'+H^p\left(\frac{1}{\mu_2'}\right )\mu_2' -H^p\left(\frac{1}{\mu_1'+\mu_2'}\right )(\mu_1'+\mu_2')  \right] =0. 
\end{equation}
We now use the positive homogeneity of $H$ and the fact that $\mu_i' < 0$ to observe that the integrand is equal to 
\[ H^p(-1) \left( -\abs{\mu_1'}^{1-p} -\abs{\mu_2'}^{1-p} +\abs{\mu_1'+\mu_2'}^{1-p} \right) > 0.
\]
Then, by \eqref{eq:A=0}, and also recalling that $\mathcal{E}=\mathcal{E}_1 \cup \mathcal{E}_2$,  we conclude that 
\[ \abs{(\overline \sigma, \underline \Sigma)\setminus (\mathcal{E}_1 \cup \mathcal{E}_2)}= \abs{(\overline \sigma, \underline \Sigma)\setminus \mathcal{E}}=0. \] 
Now, the same change of variable we performed in \eqref{change var} yields 
\[ \int_{\{t: \, \overline \sigma < v_*(t) < \underline \Sigma\}} H^p((v_*)') = - \int_{(\overline \sigma, \underline \Sigma) \setminus \mathcal{E}} H^p\left(\frac{1}{\mu'(s)}\right) \mu'(s)=0. \]
Since by our contradiction hypothesis we have $\overline \sigma < \underline \Sigma$, and, as $H(t)=0$ if and only if $t=0$,  we conclude that $v_*$ is constant, thus $\sigma=\Sigma$, a contradiction.  
\end{proof}
\begin{remark}\label{rem:I}
Let us observe that all the results of this section can be easily adapted to the case of a function $u$ defined in arbitrary interval $I\subset \R$ with $u_*$ defined in the same interval.
\end{remark}

\subsection{Anisotropic Symmetrization}\label{sec:anissym}
We again consider $H$ of the form \eqref{h:1dim} and, recalling  that the polar function $H_0$ of $H$ is defined as 
\[  
H_0(x)=\sup_{t \in \R} \frac{ t x}{H(t)} ,
\]
we have
\[ H_0(x)=
\begin{cases}
\frac x a & \text{ if } x \ge 0\\
- \frac x b & \text{ if } x < 0. 
\end{cases} 
\]
Let $I:=[0,1]$ and $u: I \to [0,+\infty)$, we define 
\[ 
I^\star= \left \{ x \in \R : H_0(-x) < \frac{1}{a+b} \right \} = \left( -\frac{a}{a+b}, \frac{b}{a+b}\right)
\]
and $u^\star: I^\star \to [0, +\infty)$ as 
\[
\begin{split}
u^\star(x)  &= \sup \{ t \in \overline \R: \abs{ \{ y: u(y) >t \} } > H_0(-x)(a+b) \} 
\\ 
& =  \begin{cases}
\sup \{ t : \abs{ \{ y: u(y) >t \} } > \frac{a+b}{b} x \} & \text{ if } x \ge 0 
\\\\
\sup \{ t : \abs{ \{ y: u(y) >t \} } > -\frac{a+b}{a} x \} & \text{ if } x < 0,
\end{cases}
\end{split}
\]
and we will call $u^\star$ the anisotropic rearrangement of $u$ with respect to $H_0$. 
\begin{remark}
For any set $E\subset \R$,  $E^{\star}$ is the interval $(\omega_{1},\omega_{2})$
with the same measure of $E$ and such that $\omega_1= -\frac a b \omega_2$; namely the sub-level set of $H_{0}(-\cdot)$ with the same measure of $E$.
Any interval satisfying $E=E^\star$   will be called an anisotropic ball. 
\end{remark}
We now introduce the anisotropic Polya inequality useful in our context.

 \begin{proposition}\label{prop: polya anis}
Let $u \in W_0^{1, p}(I,[0,+\infty))$,  then 
\begin{equation}\label{polya anis}
 \int_I H^p(u')\ge \int_{I^{\star}} H^p((u^\star)'). \end{equation}
Moreover, assume that $u \in C^1(I, [0,+\infty))$, and that  the set $\{ u'(x)=0 \}$ is finite. Then,  equality  in \eqref{polya anis} holds  if and only if $u \!\left(x+\frac{a}{a+b}\right)=u^\star(x)$. 
 \end{proposition}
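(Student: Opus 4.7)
The plan is to slice both integrals along the level sets of $u$ and of $u^\star$ via the coarea formula, reducing the Polya inequality \eqref{polya anis} to a pointwise inequality at a.e.\ level $t$, and then to derive the pointwise bound from Jensen's inequality together with a Minkowski/Hölder-type step. By an approximation argument in the spirit of Subsection \ref{monotonere}, I would first reduce to the case where $u$ is smooth with only finitely many critical values, so that a.e.\ $t>0$ is a regular value. Since $u(0)=u(1)=0$ and $u\ge 0$, at such a regular value the set $\{u=t\}$ consists of an even number $N(t)=2M(t)$ of points $x_1(t)<\cdots<x_{N(t)}(t)$, with $u'(x_i)>0$ for odd $i$ and $u'(x_i)<0$ for even $i$. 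Setting $\alpha_i:=1/|u'(x_i(t))|$, the coarea formula combined with \eqref{h:1dim} yields
\[
\int_I H^p(u')\,dx=\int_0^{\sup u}\left(a^p\sum_{i\text{ odd}}\alpha_i^{1-p}+b^p\sum_{i\text{ even}}\alpha_i^{1-p}\right)dt.
\]

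For the right-hand side, from the formula $u^\star(x)=u_*((a+b)H_0(-x))$ and $\mu_u'(t)=-\sum_i\alpha_i$, one finds that at a regular level $t<\sup u$ the set $\{u^\star=t\}$ contains exactly the two points $y_1(t)=-a\mu_u(t)/(a+b)$ and $y_2(t)=b\mu_u(t)/(a+b)$, with $|(u^\star)'(y_1)|=(a+b)/(a\sum_i\alpha_i)$ and $|(u^\star)'(y_2)|=(a+b)/(b\sum_i\alpha_i)$. Applying the coarea formula to $u^\star$ and using \eqref{h:1dim} then gives
\[
\int_{I^\star}H^p((u^\star)')\,dx=\int_0^{\sup u}\frac{(a+b)^p}{(\sum_i\alpha_i)^{p-1}}\,dt,
\]
so \eqref{polya anis} reduces to the pointwise inequality
\[
a^p\sum_{i\text{ odd}}\alpha_i^{1-p}+b^p\sum_{i\text{ even}}\alpha_i^{1-p}\ \ge\ \frac{(a+b)^p}{\left(\sum_i\alpha_i\right)^{p-1}}.
\]

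I would prove this in two steps. Convexity of $z\mapsto z^{1-p}$ for $p>1$ combined with Jensen's inequality gives $\sum_{i\text{ odd}}\alpha_i^{1-p}\ge M^p/S_o^{p-1}$ and $\sum_{i\text{ even}}\alpha_i^{1-p}\ge M^p/S_e^{p-1}$, where $S_o$ and $S_e$ denote the partial sums over odd and even indices. Then Hölder's inequality with conjugate exponents $p$ and $p/(p-1)$, applied to the pair $(Ma,Mb)$ and $(S_o^{(p-1)/p},S_e^{(p-1)/p})$, produces the Minkowski-type bound $(Ma)^p/S_o^{p-1}+(Mb)^p/S_e^{p-1}\ge M^p(a+b)^p/(S_o+S_e)^{p-1}$, with equality iff $S_o/S_e=a/b$. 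Combining both steps and using $M^p\ge 1$ yields the pointwise inequality, and integrating in $t$ proves \eqref{polya anis}.

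For the equality case, the stronger hypothesis $u\in C^1(I)$ with $\{u'=0\}$ finite makes the coarea argument rigorous without approximation. Equality in \eqref{polya anis} then forces equality in all three steps for a.e.\ $t$: from $M^p=1$ one gets $M(t)\equiv 1$, so $u$ is unimodal with a unique maximum at some $x_0$; the Jensen step is then automatic; and the Hölder step forces $\alpha_1/\alpha_2=a/b$, i.e.\ $|u'(x_2(t))|/|u'(x_1(t))|=a/b$. Using $x_i'(t)=1/u'(x_i(t))$, integrating this ratio from $t=0$, where $x_1=0$ and $x_2=1$, yields $x_0-x_1(t)=\tfrac{a}{a+b}\mu_u(t)$ and $x_2(t)-x_0=\tfrac{b}{a+b}\mu_u(t)$, so that $x_0=a/(a+b)$ and the superlevel sets of $u$ are exactly those of $u^\star$ translated by $a/(a+b)$, whence $u(x+a/(a+b))=u^\star(x)$. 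The main obstacle is the book-keeping of the odd/even sign alternation of $u'$ through the coarea computation---this is what produces the factors $a^p$ and $b^p$ separately, and reflects the non-evenness of $H$---together with the propagation of the three pointwise equality conditions back to a rigid statement about $u$ itself; the unimodality forced by $M(t)\equiv 1$ is what makes this last step tractable.
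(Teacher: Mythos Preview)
Your argument is correct and is, at its core, the same level-set slicing strategy the paper uses: both reduce \eqref{polya anis} to a pointwise inequality at each regular level, and both conclude by showing that the left side dominates $M(t)^p(a+b)^p/(\sum_i\alpha_i)^{p-1}$, so that the whole inequality collapses to $M(t)^p\ge 1$. The paper carries this out via explicit inverse branches $\rho_j$ and a single Jensen step applied to $t\mapsto t^p$ with the weights $\alpha_j=|\rho_j'|/\sum_k|\rho_k'|$ and the values $H((\rho_j')^{-1})$; you instead use the coarea formula and split the convexity into a Jensen step on $z\mapsto z^{1-p}$ within the odd/even groups followed by a H\"older (Minkowski-type) recombination. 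Both routes land on the identical intermediate bound and the identical equality analysis (forcing $N(t)=2$ and $a|u'(x_1)|=b|u'(x_2)|$), and your integration of $b\,x_1'(t)+a\,x_2'(t)=0$ from $t=0$ is exactly the paper's integration of \eqref{eq rho}. The coarea packaging is arguably a bit cleaner than the paper's explicit change of variables, and your two-step convexity makes the role of the non-evenness of $H$ (the separate $a^p,b^p$ weights) slightly more transparent; conversely, the paper's single Jensen is more compact.
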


\begin{remark}
Anisotropic Polya inequalities, together with the study of the equality case,  have first been proved in \cite{AlvinoFLT} (see also \cite{FV}) for every dimension  assuming $H(t \xi)=|t|H(\xi)$. Unfortunately,  the function $H$ given by \eqref{h:1dim}  does not enjoy this property.
Generalizations of \eqref{polya anis} are provided in \cite{VanSchaft} except for the study of the equality case. 
As we treat a quite simple situation, we provide a direct approach to show both
the inequality and the characterization of the equality case suitable to our situation.
  
\end{remark}	

\begin{proof}
Let us first prove the inequality \eqref{polya anis}. 
We will assume $u$ is piecewise affine, the case $W^{1,p}$ follows by density. We will adapt \cite[Theorem 2.9]{Kawohl}. 

Let  $\left\{ a_1 \leqslant \dots \leqslant a_k\right\}$ be the values of $u$ at the non-differentiability points, set $a_0=0$ and let 
 \[ \begin{split}
D_i &= \left\{ x \in [0, 1]: a_i < u(x) < a_{i+1} \right\} \quad\qquad E_i = \left\{ x \in I^{\star}: a_i < u^{\star}(x) < a_{i+1} \right\}=E_{i}^{-}\cup E^{+}_{i}
\\
E_i^{-} &= \left\{ x \in\left[-\frac{a}{a+b}, 0\right]: a_i < u^{\star}(x) < a_{i+1} \right\},\quad
E_i^{+} = \left\{ x \in\left[0,\frac{b}{a+b}\right]: a_i < u^{\star}(x) < a_{i+1} \right\}.
\end{split}\]

Taking into account that $H(0)=0$, it is enough to prove that
\begin{equation}\label{eq:goal}
\int_{D_i} H^p(u') \ge \int_{E_i} H^p\left((u^\star)'\right), \qquad \text{for $i=0,\dots, k-1.$}
\end{equation}

Recalling the decomposition \eqref{eq:decomposition-Di} we can consider the monotone and differentiable functions $\rho_{j}:(a_{i},a_{i+1})\mapsto Y_{ij}$ as in the proof of Proposition \ref{polya monotona} such that \eqref{eq:infoRhoj} holds and we notice that, since $u(0)=u(1)=0$, 
\[
\sign\, u'(\rho_j)=(-1)^{j+1}. 
\]
 
By definition, $u^{\star }$ is strictly increasing in $E_i^-$ and 
strictly decreasing in $E_i^+$, so that we can define $\rho_\pm^\star :(a_i, a_{i+1}) \to E_i^\pm$ such that $\rho_-^\star(\lambda)$ is the unique negative value such that $u^\star(\rho^\star_-)=\lambda$, and $\rho_+^\star(\lambda)$ the unique positive value such that $u^\star(\rho^\star_+)=\lambda$. In addition we have
\[
(u^\star)'\left(\rho_-^\star(\lambda)\right) = \left((\rho^\star_-)'(\lambda) \right)^{-1} \text{ in } E_i^- \;,\;\;  (u^\star)'\left(\rho_+^\star(\lambda)\right) = \left((\rho^\star_+)' (\lambda) \right)^{-1} \text{ in } E_i^+. 
\]
As a consequence, it results
\begin{equation}\label{eq:signrho}
\begin{split}
\rho_-^\star(\lambda)= \frac{a}{a+b} \sum_{j=1}^N (-1)^{j+1} \rho_j(\lambda)
\quad&\implies\quad
(\rho^\star_-)'(\lambda) = \frac{a}{a+b} \sum_{j=1}^N \abs{ \rho_j' (\lambda)}
\\
 \rho_+^\star(\lambda)= \frac{b}{a+b} \sum_{j=1}^N (-1)^j \rho_j(\lambda)
\quad\quad&\implies\quad
( \rho^\star_+)'(\lambda)= - \frac{b}{a+b} \sum_{j=1}^N \abs{  \rho_j' (\lambda) }. 
\end{split}\end{equation}

Then,  since showing \eqref{eq:goal}  is equivalent to prove
\[ 
\sum_{j=1}^{N(i)} \int_{Y_{ij} } H^p(u') \ge \int_{E_i^+} H^p((u^\star)') + \int_{E_i^-} H^p((u^\star)'), 
\]
we can exploit a change of variable and obtain the following  inequality
\[
 \sum_{j=1}^{N(i)} \int_{a_i}^{a_{i+1}} H^p\left[\left(  \rho'_j  \right)^{-1} \right]\abs{  \rho'_j} 
 \ge 
 \int_{a_i}^{a_{i+1}}  H^p\left[\left( (\rho_-^\star)' \right)^{-1} \right] \abs{ \left(\rho_-^\star\right)'}+  
 \int_{a_i}^{a_{i+1}}  H^p\left[
 \left( ( \rho_+^\star)' \right)^{-1} \right] \abs{ ( \rho_+^\star)'},
 \]
which, in view of \eqref{eq:signrho},  is equivalent to
\begin{align*} 
 \sum_{j=1}^{N(i)} \int_{a_i}^{a_{i+1}}\alpha_{j} H^p\left[\left(  \rho'_j  \right)^{-1} \right] 
 \ge &
 \int_{a_i}^{a_{i+1}}  
 \frac{a}{a+b} H^p\left[ \left( \frac{a}{a+b} \sum_{j=1}^{N(i)}  
 \abs{ \rho'_j} \right)^{-1}\right]
 \\
 &+ \int_{a_i}^{a_{i+1}}  \frac{b}{a+b} H^p\left[ \left(-\frac{b}{a+b} \sum_{j=1}^{N(i)}  \abs{ \rho'_j} \right)^{-1} \right] ,
\end{align*}
 where
\begin{equation}\label{eq:alphaj}
\alpha_j= \abs{  \rho'_j} \left( \sum_{j=1}^{N(i)}  \abs{ \rho'_j} \right)^{-1}\hskip-6pt,
\quad \text{so that }   \sum_{j=1}^{N(i)} \alpha_{j}=1.
\end{equation}
Then, it is sufficient to prove that
\begin{equation}\label{eq:integrand}
\begin{split}
\sum_{j=1}^{N(i)} 
\alpha_j H^p\left[ \left( \rho_j'  \right)^{-1} \right]  
\ge&
  \frac{a}{a+b} H^p\left[
\left(\frac{a}{a+b}\sum_{j=1}^{N(i)} \abs{ \rho'_j }\right)^{-1} \right]
\\
&+  \frac{b}{a+b} 
H^p\left[\left(-\frac{b}{a+b}\sum_{j=1}^{N(i)}  \abs{ \rho_j'} \right)^{-1} \right], 
\end{split}\end{equation}
for  $\lambda \in (a_i, a_{i+1})$.

Keeping in mind \eqref{eq:alphaj}, the  convexity of the real function $t^p$ and 
\eqref{h:1dim}, we obtain
 \begin{equation}\label{convex polya} 
 \begin{split}
 \sum_{j=1}^{N(i)} \alpha_j H^p\left[ \left(  \rho'_j\right)^{-1} \right] 
 & \geqslant
 \left\{\sum_{j=1}^{N(i)} \alpha_j H\left[ \left(\rho'_j \right)^{-1} \right] \right\}^p
= \left[
\sum_{j \text{ even}}-\alpha_j  b\left( \rho'_j\right)^{-1}  + \sum_{j \text{ odd}} \alpha_j a\left( \rho'_j \right)^{-1}  \right]^p
 \\
  & = \left[ \frac {N(i)}2 (a+b) \right]^p \left( \sum_{j=1}^{N(i)}  \abs{\rho'_j} \right)^{-p},
 \end{split}
 \end{equation}
 where we have also taken into consideration  that $N(i)$ is even. 
 On the other hand, on the right hand side of \eqref{eq:integrand} we have
 \[ \begin{split}
 H^p\left[\left(-\frac{b}{a+b}\sum_{j=1}^{N(i)}  \abs{ \rho_j'} \right)^{-1} \right]
 &= 
 b^p \left( \frac{b}{a+b} \right)^{-p} \left( \sum_{j=1}^{N(i)} 
  \abs{ \rho'_j} \right)^{-p}, 
\\
 H^p\left[\left(\frac{a}{a+b} \sum_j \abs{ \rho'_j} \right)^{-1} \right]
 &= a^p \left( \frac{a}{a+b} \right)^{-p} \left( \sum_{j=1}^{N(i)}  
 \abs{ \rho'_j} \right)^{-p}. 
 \end{split}
 \]
 This, together with \eqref{convex polya}, implies that \eqref{eq:integrand} is equivalent to
\begin{equation}\label{final ineq polya}  
\left[ \frac {N(i)}2 (a+b) \right]^p \ge (a+b)^p 
\end{equation}
which is satisfied as $N(i) \ge 2$. 

Let us now study the equality case. We preliminary observe that the arguments above also work for a function $u \in C^1(I)$ such that the set $\{ u'(x)=0 \}$ is finite, once we choose $\{ a_{0}\leqslant a_1 \leqslant \dots \leqslant a_k \}$ the values of $u$ at these points.

 With this choice, the functions $\rho_j:(a_i, a_{i+1}) \mapsto Y_{ij}$ are well defined and differentiable, and, by continuity, we can also define  $\rho_j(a_i)$ for any  $j=1, \dots, N(i)$ and any $i=0,\dots, k-1$.

In particular \eqref{convex polya} and \eqref{final ineq polya} hold. 

As a consequence, the strict convexity of the real function $t^{p}$ implies that
equality holds in \eqref{polya anis} if only if $N(i)=2$ for any $i$ and equality holds in \eqref{convex polya}.  Then,  \eqref{h:1dim} yields
\[ 
a \left(  \rho'_1 \right)^{-1}= H\left(\left(  \rho'_1  \right)^{-1} \right)=H\left[\left( \rho'_2  \right)^{-1} \right]=-b \left( \rho'_2 \right)^{-1}
\qquad \text{ in $(a_i, a_{i+1})$. }
\]
Namely,
\begin{equation}\label{eq rho} 
b \rho'_1(\lambda)=-a  \rho'_2(\lambda) \quad \text{ for all } \lambda \in (a_i, a_{i+1}), \, i=0, \dots, k-1. 
\end{equation}
Integrating this expression in $(0, t)$, $t \leqslant a_1$, we get 
 \[ \rho_2(\lambda)= -\frac{b}a \rho_1(\lambda) + \frac b a \rho_1(0) +\rho_2(0)= -\frac{b}a \rho_1(\lambda)+1, \]
and in particular
\[ \rho_2(a_1)=-\frac{b}a \rho_1(a_1)+1. \]
Repeating the argument on each interval $(a_i, a_{i+1})$, we conclude
\[ \rho_2(\lambda)=-\frac{b}a \rho_1(\lambda)+1,\qquad \text{for any $\lambda\in [0, \norm{u}_\infty)$.}
 \]
 Since $\rho_1(\norm{u}_\infty)=\rho_2(\norm{u}_\infty)$, we also deduce that $u$ attains its maximum for  $x=\frac{a}{a+b}$. 

We claim that superlevel sets of $u \!\left(x+\frac{a}{a+b}\right)$ are  anisotropic ball. Indeed, fix $t \in (0, \norm{u}_\infty)$, and let $x_1 \in [-\frac{a}{a+b}, 0]$ such that $u(x_1+\frac{a}{a+b})=t$, and $x_2 \in[0, \frac{b}{a+b}]$ such that $u(x_2+\frac{a}{a+b})=t$. Thus, 
\[ x_1+\frac{a}{a+b} = \rho_1(\lambda)= -\frac a b \rho_2(\lambda) + \frac a b = -\frac a b\left (x_2 + \frac{a}{a+b}\right) + \frac a b, \]
which immediately  implies $ x_1= -\frac a b x_2$.
Namely, the superlevel set of $u \!\left(x+\frac{a}{a+b}\right)$ at $t$
 is an anisotropic ball.  Hence
$u \!\left(x+\frac{a}{a+b}\right)=u^\star(x)$. 
\end{proof} 
\begin{remark}\label{rem:coarea}
We believe that the proof of the rigidity result in Proposition \ref{prop: polya anis}
may be addressed  as done in \cite{FV, ET}, paying attention to the effect produced by the loss of
evenness on the  anisotropy.  Indeed, one may
argue as in \cite{FV} to prove that equality  in  \eqref{polya anis} implies that the 
super-level sets of $u$ are intervals; then, under the additional assumption $|\{u'(x)=0\}|=0$,
one may show, following \cite{ET}, that they are centered in the same point.
Here, we have given a elementary proof suitable for our context.
\end{remark}

We end this section with the following analog of Proposition \ref{prop: polya anis} for negative functions. Recall that $\widetilde H(x):=H(-x)$, and that its polar 
 function  is given in \eqref{eq:H-zero-tilde}.  
\begin{corollary}\label{lem:anis polya 2}
Let $v: I:=[0, 1] \to (-\infty,0]$ such that $v \in W_{0}^{1,p}(0,1)$. 
 Define $v^\# :=-(-v)^\star:\widetilde{I}^\star \to (-\infty,0]$, where $(\cdot)^\star$ is the anisotropic rearrangement with respect to $\widetilde H_0(x)$, namely
\[  (-v)^\star(x)= 
\begin{cases}
\sup \{ t : \abs{ \{ y: -v(y) >t \} } \ge \frac{a+b}{a} x \} & \text{ if } x \ge 0 \\
\sup \{ t : \abs{ \{ y: -v(y) >t \} } \ge -\frac{a+b}{b} x \} & \text{ if } x < 0,
\end{cases} \]
with $\widetilde{I}^\star=(-\frac{b}{a+b}, \frac{a}{a+b})$. 
Then 
\begin{equation}\label{eq:polya neg} 
\int_I H^p(v') \ge \int_{\widetilde{I}^\star} H^p((v^\#)'). \end{equation}
Moreover, if $v \in C^1(I)$ and  the set $\{x : v'(x)=0 \}$ is finite, then equality holds in \eqref{eq:polya neg} if and only if $v\left(x+\frac{b}{a+b} \right)= v^\#(x)$.
\end{corollary}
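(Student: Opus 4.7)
The plan is to reduce the corollary directly to Proposition \ref{prop: polya anis} via the substitution $u := -v \geq 0$ together with the auxiliary function $\widetilde H(\xi) = H(-\xi)$. The key point is that $\widetilde H$ satisfies exactly the hypotheses \eqref{h:norma}, \eqref{h:posom}, \eqref{h:convex} that were imposed on $H$, so Proposition \ref{prop: polya anis} applies verbatim with $H$ replaced by $\widetilde H$.

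First, I would set up the dictionary between the two problems. From $v = -u$ and the positive $1$-homogeneity of $H$ one has
\[
H^p(v') = H^p(-u') = \widetilde H^p(u').
\]
Moreover, $\widetilde H$ is, in view of \eqref{h:1dim}, of the same piecewise linear form but with the roles of $a$ and $b$ swapped, so its polar function is $\widetilde H_0$ as in \eqref{eq:H-zero-tilde}, the associated reference interval is $\widetilde I^{\star}=\bigl(-\frac{b}{a+b},\frac{a}{a+b}\bigr)$, and the anisotropic rearrangement of $u=-v$ with respect to $\widetilde H_0$ is precisely the $(\cdot)^\star$ appearing in the statement. In particular $v^{\#} = -u^{\star}$, hence $H^p((v^{\#})') = \widetilde H^p((u^{\star})')$.

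Next, I would apply Proposition \ref{prop: polya anis} to $u\in W^{1,p}_{0}(0,1)$ with $\widetilde H$ in place of $H$, which yields
\[
\int_I \widetilde H^p(u') \;\geq\; \int_{\widetilde I^{\star}} \widetilde H^p\bigl((u^{\star})'\bigr).
\]
Rewriting both sides through the identities above gives \eqref{eq:polya neg}. The regularity assumption $v\in C^1(I)$ and finiteness of $\{v'=0\}$ transfer to $u$ without change, so the rigidity part of Proposition \ref{prop: polya anis} applies: equality forces $u\bigl(x+\tfrac{\tilde a}{\tilde a+\tilde b}\bigr) = u^{\star}(x)$, where $\tilde a, \tilde b$ are the constants of $\widetilde H$. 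Since $\tilde a = b$ and $\tilde b = a$, this becomes $u(x+\tfrac{b}{a+b}) = u^{\star}(x)$, and multiplying by $-1$ yields $v(x+\tfrac{b}{a+b}) = v^{\#}(x)$, as required.

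There is no genuine obstacle here; the only care needed is bookkeeping, namely checking that under $H\mapsto\widetilde H$ the polar function, the symmetrization interval, and the translation constant in the rigidity statement all transform consistently, which they do by the symmetry $\widetilde{\widetilde H}=H$ and the swap $a\leftrightarrow b$. Thus the corollary follows essentially by applying Proposition \ref{prop: polya anis} once, after a sign change.
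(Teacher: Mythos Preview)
Your proof is correct and follows essentially the same approach as the paper: both reduce to Proposition~\ref{prop: polya anis} applied to $u=-v$ with $\widetilde H(\xi)=H(-\xi)$ in place of $H$, using that this substitution swaps $a\leftrightarrow b$ and hence produces the polar $\widetilde H_0$, the interval $\widetilde I^\star$, and the translation constant $\tfrac{b}{a+b}$ appearing in the statement. Your write-up is simply more detailed than the paper's two-line proof.
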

\begin{proof}
Just notice that by Proposition \ref{prop: polya anis} 
\[ 
 \int_I H^p(v') = \int_I \widetilde H^p(-v') \geqslant
\int_{\widetilde{I}^\star} \widetilde H^p((-v^\star)')
= \int_{\widetilde{I}^\star} H^p((v^\#)').  
 \]
The statement about the equality case follows again by Proposition \ref{prop: polya anis}. 
\end{proof}

\section*{Acknowledgments} 
Work partially supported by  PRIN-2017-JPCAPN Grant: ``Equazioni 
differenziali alle derivate parziali non lineari'', by project Vain-Hopes within the program VALERE: VAnviteLli pEr la RicErca , by the INdAM-GNAMPA group,  by the Portuguese government through FCT - Funda\c c\~ao para a Ci\^encia e a Tecnologia, I.P., under the projects UID/MAT/04459/2020 and PTDC/MAT-PUR/1788/2020 and, when eligible, by COMPETE 2020 FEDER funds, under the Scientific Employment Stimulus - Individual Call (CEEC Individual) -\\ 2020.02540.CEECIND/CP1587/CT0008.

\end{document}